\documentclass[11pt]{article} %
\usepackage{fullpage}
\usepackage{amsthm}
\usepackage{amsmath, amssymb, graphicx, url}
\usepackage{color}
\usepackage{todonotes}
\usepackage{dsfont}
\usepackage{hyperref}
\usepackage[capitalize,noabbrev]{cleveref}

\newcommand{\cH}{\mathcal{H}}

\newcommand{\cN}{\mathcal{N}}
\newcommand{\cO}{\mathcal{O}}
\newcommand{\cP}{\mathcal{P}}

\newcommand{\E}{\mathbb{E}}

\newcommand{\N}{\mathbb{N}}
\newcommand{\Q}{\mathbb{Q}}
\newcommand{\R}{\mathbb{R}}

\newcommand{\xsingle}{\widehat\rho}
\newcommand{\xell}{\widehat\rho^\ell}

\newcommand{\xellL}{\widehat\rho^L}
\newcommand{\xellzero}{\widehat\rho^0}
\newcommand{\xelltilde}{\widetilde{\widehat\rho}^\ell}
\newcommand{\xelltildeminus}{\widetilde{\widehat\rho}^{\ell-1}}

\newcommand{\mfsingle}{\rho}
\newcommand{\mfell}{\rho^\ell}
\newcommand{\mfellminus}{\rho^{\ell-1}}
\newcommand{\mfellL}{\rho^L}
\newcommand{\mfellzero}{\rho^0}
\newcommand{\mirrormfsingle}{\overline\rho}
\newcommand{\mirrormfell}{\overline\rho^\ell}

\newcommand{\mirrormfelltilde}{\widetilde{\overline\rho}^\ell}
\newcommand{\mirrormfelltildeminus}{\widetilde{\overline\rho}^{\ell-1}}
\newcommand{\mirrormfellzero}{\overline\rho^0}

\usepackage{enumitem}
\usepackage[normalem]{ulem}

\newcommand{\dd}{{\rm d}}

\DeclareMathOperator{\Var}{Var}
\DeclareMathOperator{\var}{Var}
\DeclareMathOperator{\KL}{KL}
\DeclareMathOperator{\SL}{SL}
\DeclareMathOperator{\ML}{ML}

\DeclareMathOperator{\clip}{c_{lip}}
\DeclareMathOperator{\alip}{a_{lip}}
\DeclareMathOperator{\blip}{b_{lip}}
\DeclareMathOperator{\dlip}{d_{lip}}
\DeclareMathOperator{\cost}{cost}

\newcommand{\Capp}{{C_{\rm app}}}

\usepackage{todonotes}

\definecolor{sw}{rgb}{1,0.53,0.0}

\definecolor{jz}{rgb}{0.1,0.45,0.1}

\DeclareMathOperator{\diag}{diag}

\def\KL#1#2{\textnormal{KL}({#1}\Vert{#2})}

\usepackage{mathtools}
\DeclarePairedDelimiter\ceil{\lceil}{\rceil}

\usepackage{algorithm,algpseudocode} 
\usepackage{authblk}

\title{On the mean-field limit for Stein variational gradient descent: stability and multilevel approximation}

\author[1]{Simon Weissmann}
\author[2]{Jakob Zech}
\date{\today}
\affil[1]{\normalsize
  Universit\"at Mannheim, Institute of Mathematics\\
  68138 Mannheim, Germany\\
\texttt{simon.weissmann@uni-mannheim.de}
}
\affil[2]{\normalsize
  Universit\"at Heidelberg, Interdisziplin\"ares Zentrum f\"ur Wissenschaftliches Rechnen\\
  69120 Heidelberg, Germany\\
\texttt{jakob.zech@uni-heidelberg.de}
}

\date{\today}

\newtheorem{theorem}{Theorem}[section]

\newtheorem{lemma}[theorem]{Lemma}

\newtheorem{proposition}[theorem]{Proposition}
\newtheorem{remark}[theorem]{Remark}

\newtheorem{assumption}[theorem]{Assumption}

\begin{document}

\maketitle

\begin{abstract}
In this paper we propose and analyze a novel multilevel version of Stein variational gradient descent (SVGD). SVGD is a recent particle based variational inference method. For Bayesian inverse problems with computationally expensive likelihood evaluations, the method can become prohibitive as it requires to evolve a discrete dynamical system over many time steps, each of which requires likelihood evaluations at all particle locations. To address this, we introduce a multilevel variant that involves running several interacting particle dynamics in parallel corresponding to different approximation levels of the likelihood. By carefully tuning the number of particles at each level, we prove that a significant reduction in computational complexity can be achieved. As an application we provide a numerical experiment for a PDE driven inverse problem, which confirms the speed up suggested by our theoretical results.
\end{abstract}

\noindent
{\bf Keywords:} Stein variational gradient descent, multi-level methods, mean-field limit, Bayesian inference

\section{Introduction}\label{sec:Introduction}

The Stein variational gradient descent (SVGD) method is an optimization-based variational inference algorithm and has been introduced as an efficient sampling method for Bayesian inference problems \cite{SVGD}. While in practical applications the algorithm is implemented through a finite particle approximation, the theoretical understanding has mainly been developed in the mean-field (MF) limit, e.g.\ \cite{SVGDgradientflow}.
Therefore, it is crucial to develop efficient approximations of the MF limiting system. In this manuscript, we view the interacting particle system as a approximation of the MF limit and develop a novel multilevel SVGD (ML-SVGD) algorithm. %
Our analysis primarily relies on the finite time convergence analysis
  of SVGD developed in \cite{NEURIPS20203202111c}, and on concepts
  from multilevel Monte Carlo (MLMC), see for example \cite{MR2436856,Heinrich2001}.
We point out that MLMC methods have %
in earlier works been combined with Markov chain Monte Carlo (MCMC)
methods \cite{MLMCMC} and deterministic quadrature schemes \cite{MR2648461,MR3636617,MR3502561,MLGPC}. 

As a motivating example, we consider SVGD for solving a Bayesian
inference problem. In Bayesian inverse problems \cite{DS2017} the goal
is to explore a posterior distribution through the generation of
samples.
Apart from MCMC methods \cite{RC2004} which are widely used,
methods rooted in variational inference have become %
a popular alternative, e.g.\ \cite{doi:10.1080/01621459.2017.1285773}.
We consider the observation model
\[Y = F(X) + \eta, \]
where $F:\R^d\to \R^{n_y}$ denotes the forward model, $(X,Y)$ are assumed to be jointly varying random variables on $\R^d\times \R^{n_y}$ and $\eta\sim\mathcal N(0,C_0)$ denotes Gaussian additive observational noise that is assumed independent of $X$. %
For a prior distribution $\mathbb Q_0$ on $X$, the central task in Bayesian inference is to quantify the posterior distribution---the distribution of $X$ conditional on a realization $Y=y$---which can be written as
\begin{equation*} 
\begin{split}
\pi(x) &\propto \exp(-\frac12\Phi(x,y))\Q_0(dx),\\
 \Phi(x,y)&:= \|C_0^{-1/2}(F(x)-y)\|^2,\quad x\in\R^d,\ y\in\R^{n_y}\, . 
 \end{split}
 \end{equation*}
When the prior is assumed to admit a Lebesgue density, the posterior probability density function (pdf) %
is often written in the form
\[\pi(x) \propto \exp(-V(x)),\quad x\in\R^d\, , \]
for a potential
$V:\R^d\to\R$.
One way to quantify the posterior distribution is to solve the variational problem
\[\min_{\psi\in\Psi}\ \KL{\psi}{\pi} \]
where $\Psi$ denotes a family of (tractable) probability distributions and $\KL{\psi}{\pi}$ denotes the Kullback-Leibler (KL) divergence of $\psi$ and $\pi$. In the mean-field limit, SVGD can be motivated as Euler approximation of the Wasserstein gradient flow represented in a reproducing kernel Hilbert space (RKHS) minimizing the KL divergence between a reference distribution and target distribution of interest \cite{Duncan2019OnTG,NEURIPS20203202111c,SVGDgradientflow}. %
In discrete-time it can be viewed as an iterative scheme which involves multiple evaluations of the gradient $\nabla \log\pi$, where $\pi$ is the target pdf of the posterior. %
Each evaluation of $\nabla\log\pi$ requires to evaluate the forward map $F$, which can be computationally expensive. This is in particular the case if $F$ models some physical, chemical or biological phenomenon, that requires to numerically solve a partial differential equation. In these scenarios where we are only able to evaluate an approximation $F_\ell$ of $F$, where $\ell\in\N$ stands for a discretization level that is associated with the accuracy of the approximation.
  A convergence analysis for SVGD as an interactive particle
  system needs to take into account
  such errors stemming from the approximation
  of $F$ as well as the finite number of particles.
  Viewing the interacting particle system as
  an approximation of the mean field limiting system, %
  in this paper we propose
  a multi-level variant of SVGD in the spirit of MLMC. %
  More precisely, we formulate a novel family of independent particle systems, where many particles evolve according to a dynamics driven by $F_\ell$ with low $\ell$ (associated to low accuracy but also low computational cost) which is corrected by few particles evolving with a dynamics driven by $F_\ell$ for high $\ell$ (associated to high accuracy and high computational cost). This allows us to keep the overall computational cost low and thereby speed up the algorithm.

  We emphasize that our %
  method differs crucially from the previously introduced multi-level SVGD method in \cite{alsup22a}, %
  which is based on gradually increasing the accuracy level %
  as the particles evolve.
  Their
  algorithm is formulated in the mean-field limit and the proposed method employs
 an equal number of particles on each accuracy level. In contrast to our work, their method %
   strongly depends on assuming exponential convergence for the mean-field limiting system towards the target distribution. %
Ideas similar to \cite{alsup22a} have also been proposed as general
\textit{multi-level optimization} tools, see 
\cite{MR4284423,MR4294188} specifically for stochastic gradient methods
and \cite{MLOPTI} for a unified approach treating various deterministic
and stochastic methods.

In order to formulate our multi-level SVGD scheme we borrow ideas from multi-level particle methods in the area of data assimilation \cite{LSZ2015}. The idea of viewing the particle systems as Monte Carlo (MC) approximation of the mean-field limiting system has led to the formulation of various multi-level ensemble Kalman filters \cite{doi:10.1137/15M100955X,CHLNT2021,HST2020,chada2021multilevel} and more generally of multi-level mean-field approximation of McKean-Vlasov equation \cite{HT2018}. 

\paragraph{Contributions:}  
\begin{itemize}
\item 
We propose a
  novel multilevel SVGD method that is based on a careful combination
  of several finite interacting particle systems with differing
  sample sizes and differing accuracy levels.  
\item In order to analyse the multilevel SVGD method we study the
  behavior of the MF system under changes in the target probability
  distribution function (pdf) $\pi$. More precisely, we prove
  that %
  small changes in %
      $\pi$ yield %
        small changes in the solution to the MF system in terms
        of the Wasserstein-2 distance.
  \item %
    We provide a complete error analysis of the proposed multilevel
    estimator for expectations with respect to the MF
    solution.  %
    As we show, a careful tuning of the required samples at each level
    allows to decrease the overall computational cost of the
    algorithm.
\end{itemize}

\subsection{Preliminaries and notation}
In this manuscript, we focus on the
  discrete-time formulation of SVGD, for which a convergence analysis was
  recently developed in \cite{NEURIPS20203202111c}. Our analysis
  strongly builds upon these results, and we therefore largely adopt
  their setting and notation, and also refer to this paper for more
  details on the operators introduced in the following.

The MF limit of SVGD can be described in terms of the Wasserstein distance
between the empirical measure over the ensemble of particles and the
limiting distribution evolving in time. %
Throughout the following we consider $\R^d$ equipped with the Borel
$\sigma$-algebra $\mathcal{B}(\R^d)$. For two probability measures $\mu$, $\eta$ on $\R^d$,
the Wasserstein $p$-distance is defined as
\begin{equation*}
  \mathcal{W}_p(\mu,\eta) = \inf_{\nu\in \mathcal P(\mu,\eta)}\Bigg(\int_{\R^d\times\R^d} d(x,y)^p\, {\mathrm d}\nu(x,y)\Bigg)^{{1/p}},
\end{equation*}
where %
$\mathcal P(\mu,\eta)$ denotes the set of all couplings of $\mu$ and $\eta$; i.e.~for $\nu\in\mathcal P(\mu,\eta)$ we have
\begin{align*} 
\int_{\R^d\times B}{\mathrm d}\nu(x,y) &= \eta(B),\quad B\in\mathcal{B}(\R^d),\\ \int_{B\times \R^d}{\mathrm d}\nu(x,y)&=\mu(B),\quad B\in\mathcal{B}(\R^d). 
\end{align*}

Let $\cP_2(\R^d)$ be the set of probability measures on $\R^d$ with finite
first and second moment. For $\mu\in \cP_2(\R^d)$, %
we denote its mean by $m(\mu)$ and its variance by $\var(\mu)$,
\[m(\mu) = \int_{\R^d} x\,\mu(\dd x),\quad \var(\mu) = \int_{\R^d} \|x-m(\mu)\|^2 \mu(\dd x)\, . \]

To formulate SVGD we next recall some notation and
  operators introduced in \cite{NEURIPS20203202111c}, and also refer
  to this paper for further details. Throughout we
consider a fixed stationary and positive definite kernel
$k:\R^d\times\R^d \to \R$, i.e.\ $k(x,y)$ depends only on $x-y$.
We denote the corresponding RKHS by
$\cH_0$ with inner product $\langle \cdot, \cdot \rangle_{\cH_0}$.
The function space $\mathcal H = \{f:\R^d\to \R^n \mid f = (f_1,\dots,f_n),\ f_i\in\mathcal H_0\}$ defines the product RKHS $\mathcal H$ with inner product $\langle f,g\rangle_{\mathcal H} = \sum_{i=1}^n\langle f_i,g_i\rangle_{\mathcal H_0}$.
We denote the derivative of the kernel with respect to its first component by $\nabla_1 k(x,y)$, and similarly with respect to its second component by $\nabla_2 k(x,y)$. Given a probability measure $\mu\in\cP_2(\R^d)$ and $\int_{\R^{d}} k(x,x)\, \mu(\dd x)<\infty$, i.e.~it holds that $\cH\subset L^2(\mu)$, %
and we let $S_\mu:L^2(\mu)\to\cH$ via
\[S_\mu f(y) := \int_{\R^d} k(x,y)f(x)\,\mu(\dd x),\quad f\in L^2(\mu),\ y\in \R^d\,.\]
Using %
the natural embedding
$\iota:\cH \to L^2(\mu)$, we %
define $P_\mu:L^2(\mu)\to L^2(\mu)$ %
as $P_\mu := \iota S_\mu$. The operator $P_\mu$ applied to the functional $f(\cdot) = \nabla \log \frac{\mu}{\pi}(\cdot)$ is formally of the form
\begin{equation*} 
P_\mu \nabla \log \frac{\mu}{\pi}(y)= -\int_{\R^d} \left( \nabla
    \log \pi(x) k(x,y) + \nabla_1 k(x,y)\right)\, \mu(\dd x) 
  \end{equation*}
for $y\in\R^d$. We consider this to be a definition of the
  expression on the left-hand side. The formula can be motivated
  using integration by parts if all functions and measures
  are sufficiently smooth, \cite{NEURIPS20203202111c}.
In the specific case where $\widehat\mu$ is an empirical measure $\widehat\mu = \frac{1}{N} \sum_{j=1}^N\delta_{x^{(j)}}$, we thus have for $y\in\R^d$
\begin{equation*}
 P_{\widehat \mu} \nabla \log \frac{\widehat\mu}{\pi}(y)= -\frac1N \sum_{j=1}^N \left(\nabla \log\pi(x^{(j)}) k(x^{(j)},y) + \nabla_1 k(x^{(j)},y) \right).
 \end{equation*}

\section{Stein variational gradient descent}\label{sec:SVGD}
The algorithm of SVGD %
was originally introduced as a finite interacting particle system
  $\{X_n^{(j)}(\cdot),j=1,\dots,N\}$, $n\ge 0$, of ensemble size
  $N\ge2$ evolving through
\begin{equation}\label{eq:SVGD_discr}
\begin{split}
    X_{n+1}^{(i)} = X_n^{(i)} - \gamma \frac{1}{N}\sum_{j=1}^N\Bigg\{k\big(X_n^{(j)},X_n^{(i)}\big)\nabla \log\pi\big(X_n^{(j)}\big)+ \nabla_1 k\big(X_n^{(j)},X_n^{(i)}\big) \Bigg\}
 \end{split}
\end{equation}
for $i=1,\dots,N$ with i.i.d.~initialization $X_0^{(i)} \sim \mfsingle_0$ for the
initial distribution $\mfsingle_0$. Here and throughout the rest of this
manuscript we denote by $\gamma>0$ a fixed step size. Moreover, throughout the paper we consider an initial distribution 
\begin{equation} \label{eq:initial_moment}
  \mfsingle_0\in \mathcal P_2(\R^d).
\end{equation}

We now introduce notation for three different stochastic dynamical
  systems which allow us to analyse the behaviour of SVGD in the
  following.
  \begin{enumerate}
  \item {\bf Interacting particles:} The particle system generated by
    \eqref{eq:SVGD_discr}
  will in the following be denoted by
  $\mathcal X_n = \{X_n^{(j)},\ j=1,\dots,N\}$. The corresponding
  empirical measure over the particle system is
  $\xsingle_n = \frac{1}{N}\sum_{j=1}^N \delta_{X_n^{(j)}}$. We
  introduce the notation
  \begin{align*} 
  R_\rho(z) &:= P_{\rho} \nabla\log\left(\frac{\rho}{\pi}\right)(z)\\ &=
    -\int_{\R^d} k(x,z)\nabla_x\log\pi(x) + \nabla_1
    k(x,z)\,\rho({\mathrm d}x)
    \end{align*}
     and also write
  \eqref{eq:SVGD_discr} as
\begin{equation}\label{eq:SVGD_interacting}
\begin{split}
    X_{n+1}^{(i)} &= X_n^{(i)} - \gamma P_{\xsingle_n}\nabla\log\left(\frac{\xsingle_n}{\pi}\right)(X_n^{(i)})\\ &= X_n^{(i)} -\gamma \widehat R_n(X_n^{(i)}),\quad \widehat R_n(\cdot):=R_{\xsingle_n}(\cdot),
    \end{split}
  \end{equation}
  with i.i.d.~initalization $X_0^{(i)}\sim\rho_0$, $i=1,\dots,N$.
  One important property used in the
  following %
  is that the $X_n^{(j)}$ are identically distributed (but not
  independent) for $j=1,\dots,N$. We will thus often use
  $X_n^{(1)}$ as a representant.
  \item {\bf Mean field:} The dynamics
 \eqref{eq:SVGD_discr} can be viewed as Monte Carlo-like particle approximation to the MF system
\begin{equation}\label{eq:SVGD_mf_discr}
\begin{split}
    Z_{n+1} &= Z_n - \gamma P_{\mfsingle_n}\nabla\log\left(\frac{\mfsingle_n}{\pi}\right)(Z_n)\\ &= Z_n - \gamma R_n(Z_n),\quad Z_0\sim\mfsingle_0,
    \end{split}
\end{equation}
where $\mfsingle_n$ denotes the law of the random variable $Z_n$ and $R_n(z) := R_{\mfsingle_n}(z)$. The sequence of distributions $(\mfsingle_n)_{n\ge0}$ solves the MF equation 
\begin{equation}\label{eq:SVGD_mf_measure}
\begin{split}
    \mfsingle_{n+1} &= (I-\gamma R_n)_\sharp\rho_n\\ &= \left(I-\gamma P_{\mfsingle_n}\nabla\log\left(\frac{\mfsingle_n}{\pi}\right)\right)_{\sharp} \mfsingle_n\, ,
    \end{split}
\end{equation} 
where $T_{\sharp} \mu$ denotes the pushed forward measure of $\mu$ under a measurable mapping $T:\R^d\to\R^d$. %
Due to $\rho_0\in \mathcal P_2(\R^d)$,
according to \cite[Lemma~12]{NEURIPS20203202111c} %
\begin{equation}\label{eq:moment_iterate}
\rho_n\in\mathcal P_2(\R^d)
\end{equation}
for every %
$n\in\N$.
  \item {\bf Mirrored mean field:} In order to analyze the MF limit we will consider the auxiliary particle system $\mathcal Z_n = \{Z_n^{(j)},\ j=1,\dots,N\}$, $n\in\N$ with empirical measure $\mirrormfsingle_n = \frac{1}{N}\sum_{j=1}^N \delta_{Z_n^{(j)}}$, which mirrors the MF system 
\begin{equation}\label{eq:SVGD_mf_iid}
\begin{split}
    Z_{n+1}^{(j)} &= Z_n^{(j)} - \gamma P_{\mfsingle_n}\nabla\log\left(\frac{\mfsingle_n}{\pi}\right)(Z_n^{(j)})\\ &= Z_n^{(j)} - \gamma R_n(Z_n^{(j)}),\quad j=1,\dots,N,
    \end{split}
\end{equation}
with i.i.d.~initialization
$Z_0^{(1)}\sim\mfsingle_0$. %
Note that by definition $\mathcal Z_n$ %
is an i.i.d.~sample of %
$\rho_n$, and in particular each $Z_n^{(j)}$, $j=1,\dots,N$, is a
random variable with the same distribution as $Z_n$.
\end{enumerate}

 Throughout this paper, we consider $(\mathcal X_n)_{n\ge0}$ and $(\mathcal Z_n)_{n\ge0}$ as stochastic processes on the same probability space $(\Omega,\mathcal F,\mathbb P_0)$ under which $X_0^{(j)}(\omega) = Z_0^{(j)}(\omega)$ for %
 all $\omega\in\Omega$. The expectation with respect to $\mathbb P_0$ is denoted by $\mathbb E_0$ and for $p>0$ we define the space
 \begin{align*}
   L_0^p(\R) &:= L^p(\Omega,\mathcal F,\mathbb P_0; \R,\mathcal B(\R))\nonumber\\
   &:= \Big\{ f: \Omega\to\R \mid f\ \text{measurable},\ \int_{\R} |f|^p\,
   \dd \mathbb P_0 < \infty\Big\}
\end{align*}
   with norm
 $\|f\|_{L_0^p(\R)} := \left(\int_{\R} |f|^p\, \dd \mathbb
   P_0\right)^{1/p} = \E_0[|f|^p]^{1/p}$. We summarize the introduced
 systems in \Cref{table:1}.

 \begin{table}[htb!]
   \begin{center}
\begin{tabular}{|l|c|c|c|c|c|}
  \hline
   & Notation & Evolution & Initialization & Distribution  & Size\\
  \hline
   IP & $(\mathcal X_n,\xsingle_n)$ & \eqref{eq:SVGD_interacting} & $X_0^{(j)}\sim \rho_0$ & $\xsingle_n = \frac{1}{N}\sum_{j=1}^N \delta_{X_n^{(j)}}$  & N\\
  MF & $(Z_n,\mfsingle_n)$ & \eqref{eq:SVGD_mf_discr} & $Z_0 \sim \rho_0$ & $Z_n\sim\mfsingle_n$  & 1 \\
  MMF  & $(\mathcal Z_n,\mirrormfsingle_n)$ & \eqref{eq:SVGD_mf_iid} & $Z_0^{(j)}=X_0^{(j)}$ & $\mirrormfsingle_n = \frac{1}{N}\sum_{j=1}^N \delta_{Z_n^{(j)}}$  & N \\
  \hline
\end{tabular}\caption{Overview of the of the three particle dynamics introduced
    in Section \ref{sec:SVGD}: mean field (MF), mirrored mean field
    (MMF) and interacting particle (IP).}\label{table:1}
\end{center}
\end{table}

 In \cite{NEURIPS20203202111c} the authors consider a theoretical analysis of the MF limit of the discret-time system \eqref{eq:SVGD_discr} to \eqref{eq:SVGD_mf_discr} under the following assumptions.
\begin{assumption}\label{ass:kernel2}
  There exist finite and positive constants $M$, $C_V$, $B$ and $\blip$ such that:
    \begin{itemize}
    \item[A1] %
      The Hessian of $V = \log\pi$ is %
      uniformly bounded, i.e.\
      $\|\nabla^2V(x)\|\le M$ for all $x\in\R^d$.
  \item[A2] %
    The
    gradient of %
    $V=-\log\pi$ is uniformly bounded, %
    i.e.\ 
    $\|\nabla \log\pi(x)\| = \|\nabla V(x)\| \le C_V$ for all $x\in\R^d$. %
    \item[A3] %
      The kernel function $k:\R^d\times \R^d \to\R$ is a bounded, i.e.\
      $\|k(x,\cdot)\|_{\mathcal H}\le B$ and $\|\nabla_1 k(x,\cdot)\|_{\mathcal H}\le B$ for all $x\in\R^d$.
    \item[A4] %
      The
      kernel function $k:\R^d\times \R^d \to\R$ is continuously differentiable, Lipschitz-continuous and has Lipschitz-gradient, %
    \begin{align*}
        |k(x,x')-k(y,y')| &\le \blip(\|x-y\|+\|x'-y'\|),\\ \|%
      \nabla k(x,x')-%
      \nabla
      k(y,y')| &\le \blip(\|x-y\|+\|x'-y'\|)\, .
    \end{align*}
    \end{itemize}
\end{assumption}

The following Lipschitz property is one key-tool of proving the convergence towards the MF limit.

\begin{lemma}[Lemma~14 in \cite{NEURIPS20203202111c}]\label{lem:Lipschitz_drift}
  Under Assumption~\ref{ass:kernel2}
  there exists $\clip<\infty$ depending on the constants in Assumption~\ref{ass:kernel2}, such that
  the mapping $(z,\rho)\mapsto P_\rho\nabla\log\left(\frac{\rho}{\pi}\right)$ is $\clip$-Lipschitz in the sense %
    \begin{equation*} 
      \|P_{\rho_1}\nabla\log\left(\frac{\rho_1}{\pi}\right)(z_1)-P_{\rho_2}\nabla\log\left(\frac{\rho_2}{\pi}\right)(z_2)\|\le \clip [\|z_1-z_2\| + \mathcal W_2(\rho_1,\rho_2)]\, . 
    \end{equation*}
\end{lemma}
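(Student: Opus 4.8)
The plan is to bound the difference via the triangle inequality
\[ \|R_{\rho_1}(z_1) - R_{\rho_2}(z_2)\| \le \underbrace{\|R_{\rho_1}(z_1) - R_{\rho_1}(z_2)\|}_{=:\,T_1} + \underbrace{\|R_{\rho_1}(z_2) - R_{\rho_2}(z_2)\|}_{=:\,T_2}, \]
treating the change of evaluation point ($T_1$) and the change of measure ($T_2$) separately; the first should be $O(\|z_1-z_2\|)$ and the second $O(\mathcal W_2(\rho_1,\rho_2))$. Before that I would record two elementary pointwise bounds obtained from the reproducing property together with Assumption~\ref{ass:kernel2}: by Cauchy--Schwarz, stationarity of $k$ and A3, $|k(x,z)| \le k(x,x)^{1/2}k(z,z)^{1/2} = k(0,0) \le B^2$ for all $x,z\in\R^d$, and analogously $\|\nabla_1 k(x,z)\| \le B\,k(0,0)^{1/2}\le B^2$. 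In particular the integrand defining $R_\rho(z)$ is bounded uniformly in $x$, so $R_\rho(z)$ is well defined and finite for any probability measure $\rho$ and any $z$.

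For $T_1$ I would write the difference as a single integral against $\rho_1$,
\[ R_{\rho_1}(z_1) - R_{\rho_1}(z_2) = -\int_{\R^d}\Big[\big(k(x,z_1)-k(x,z_2)\big)\nabla\log\pi(x) + \nabla_1 k(x,z_1)-\nabla_1 k(x,z_2)\Big]\,\rho_1(\dd x). \]
By A4 the maps $z\mapsto k(x,z)$ and $z\mapsto\nabla_1 k(x,z)$ are $\blip$-Lipschitz uniformly in $x$, and by A2 one has $\|\nabla\log\pi(x)\|\le C_V$, so the integrand is bounded in norm by $\blip(C_V+1)\|z_1-z_2\|$ pointwise, hence $T_1 \le \blip(C_V+1)\|z_1-z_2\|$.

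For $T_2$ I would pick a coupling $\nu\in\mathcal P(\rho_1,\rho_2)$ that is optimal for $\mathcal W_2$ (or $\varepsilon$-optimal, letting $\varepsilon\to0$ at the end). Writing $g(x) := k(x,z_2)\nabla\log\pi(x) + \nabla_1 k(x,z_2)$, we have
\[ R_{\rho_1}(z_2) - R_{\rho_2}(z_2) = -\int_{\R^d\times\R^d}\big(g(x)-g(y)\big)\,\nu(\dd x,\dd y), \]
and it remains to show $g$ is Lipschitz. The term $\nabla_1 k(x,z_2) - \nabla_1 k(y,z_2)$ is bounded by $\blip\|x-y\|$ by A4. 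For $k(x,z_2)\nabla\log\pi(x) - k(y,z_2)\nabla\log\pi(y)$, adding and subtracting $k(x,z_2)\nabla\log\pi(y)$ and using $|k(x,z_2)|\le B^2$, the Hessian bound A1 in the form $\|\nabla\log\pi(x)-\nabla\log\pi(y)\|\le M\|x-y\|$, the bound $|k(x,z_2)-k(y,z_2)|\le\blip\|x-y\|$ from A4, and $\|\nabla\log\pi(y)\|\le C_V$ from A2, one obtains $\|g(x)-g(y)\|\le(B^2 M + \blip C_V + \blip)\|x-y\|$. Jensen's inequality then gives $\int\|x-y\|\,\nu(\dd x,\dd y)\le\big(\int\|x-y\|^2\,\nu(\dd x,\dd y)\big)^{1/2} = \mathcal W_2(\rho_1,\rho_2)$, so $T_2 \le (B^2 M + \blip C_V + \blip)\,\mathcal W_2(\rho_1,\rho_2)$. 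Combining both estimates with $\clip := \max\{\blip(C_V+1),\,B^2 M + \blip C_V + \blip\} = B^2 M + \blip C_V + \blip$ gives the claim.

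The computations above are essentially bookkeeping of constants; the one conceptual step is the coupling argument for $T_2$, which is what converts the dependence of $R_\rho$ on the measure into the Wasserstein-2 distance. The only points requiring a little care are the uniform pointwise bounds on $k$ and $\nabla_1 k$ (used both for well-posedness and for the Lipschitz constant of $g$), which are not literally stated among the hypotheses but follow at once from the RKHS norm bounds in A3 via the reproducing property.
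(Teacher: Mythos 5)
Your proof is correct. Note that the paper itself gives no proof of this lemma --- it is imported verbatim as Lemma~14 of \cite{NEURIPS20203202111c} --- so the only meaningful comparison is with that reference, whose argument has the same structure as yours: split off the change of evaluation point from the change of measure, bound the first via A2 and the kernel Lipschitz bounds in A4, and convert the second into $\mathcal W_2(\rho_1,\rho_2)$ by integrating a Lipschitz integrand against an optimal coupling and applying Cauchy--Schwarz. Your bookkeeping of the constants (the pointwise bounds $|k(x,z)|,\|\nabla_1 k(x,z)\|\le B^2$ from A3 via the reproducing property, the Hessian bound A1 for the Lipschitz constant of $\nabla\log\pi$, and the final constant $\clip=B^2M+\blip C_V+\blip$) is sound, so nothing is missing.
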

Moreover, we will make use of a discrete Gronwall inequality.
\begin{lemma}[Lemma~13 in \cite{NEURIPS20203202111c}]\label{lem:discr_Gr}
    Suppose that the real valued sequence $(c_n)_{n\in\N}$ satisfies $c_0=0$ and the iterative inequality
\[c_{n+1}\le (1+\gamma A)c_n + b\]
for some $\gamma$, $A$, $b>0$. Then $c_n$ satisfies 
\[c_{n}\le \frac{b}{\gamma A} \left(\exp(n\gamma A)-1\right)\,.\] 
\end{lemma}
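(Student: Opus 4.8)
The plan is to proceed by a straightforward induction on $n$; an equivalent route is to solve the linear recursion explicitly and then bound the resulting geometric sum. I will present the inductive version. The base case is immediate: for $n=0$ the hypothesis gives $c_0=0$, and the claimed right-hand side $\frac{b}{\gamma A}(\exp(0)-1)$ also equals $0$.

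For the inductive step, assume $c_n\le \frac{b}{\gamma A}(\exp(n\gamma A)-1)$. Substituting this into $c_{n+1}\le(1+\gamma A)c_n+b$ and collecting terms, I expect to arrive at
\[
c_{n+1}\le (1+\gamma A)\frac{b}{\gamma A}\big(\exp(n\gamma A)-1\big)+b = \frac{b}{\gamma A}\big((1+\gamma A)\exp(n\gamma A)-1\big).
\]
At this point the only nontrivial ingredient is the elementary convexity bound $1+x\le \exp(x)$, valid for all $x\in\R$, applied with $x=\gamma A>0$. This yields $(1+\gamma A)\exp(n\gamma A)\le \exp(\gamma A)\exp(n\gamma A)=\exp((n+1)\gamma A)$, which gives $c_{n+1}\le\frac{b}{\gamma A}(\exp((n+1)\gamma A)-1)$ and closes the induction.

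Alternatively, unrolling the recursion directly gives $c_n\le b\sum_{j=0}^{n-1}(1+\gamma A)^j = b\,\frac{(1+\gamma A)^n-1}{\gamma A}$, and the same inequality $(1+\gamma A)^n\le\exp(n\gamma A)$ finishes the argument. There is essentially no obstacle here; the only points requiring a moment's care are performing the algebraic rearrangement in the inductive step correctly and invoking the bound $1+x\le e^x$ in the direction that preserves the stated upper bound.
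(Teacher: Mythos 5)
Your proof is correct: the induction closes exactly as you describe, and the only inequality needed is $1+\gamma A\le e^{\gamma A}$ (with $1+\gamma A>0$ ensuring the inductive hypothesis can be multiplied through). The paper itself gives no proof of this lemma---it is imported verbatim as Lemma~13 of the cited reference---and your argument (equivalently, the unrolled geometric-sum version) is the standard one used there, so there is nothing to reconcile.
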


Using both of these key-tools, the MF limit can be quantified in the following way:
\begin{proposition}[Proposition~7 in \cite{NEURIPS20203202111c}]\label{prop:MF}
Suppose Assumption~\ref{ass:kernel2} is satisfied. Then for all $T>0$ and any $n<T/\gamma$
\begin{align*} 
\mathbb E[\mathcal W_2(\xsingle_n,\mfsingle_n)] &\le c_n := \left(\frac1N\sum_{j=1}^N\mathbb E_0[\|X_n^{(j)}-Z_n^{(j)}\|^2]\right)^{1/2}\\ &\le\frac12\left(\frac{1}{\sqrt{N}} \sqrt{\Var(\mfsingle_0)} e^{A T}\right)(e^{2 A T}-1),
\end{align*}
where $A>0$ is a constant depending on $\pi$ and $k$.
\end{proposition}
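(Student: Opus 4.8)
The plan is to couple the interacting particle system (IP), \eqref{eq:SVGD_interacting}, with the mirrored mean-field system (MMF), \eqref{eq:SVGD_mf_iid} — both of which already live on $(\Omega,\mathcal F,\mathbb P_0)$ with $X_0^{(j)}=Z_0^{(j)}$ — and to control $c_n$ by a discrete Gronwall argument. Since $c_0=0$, it suffices to establish a one-step recursion of the form $c_{n+1}\le(1+\gamma A)c_n+\gamma b_n$ with $b_n=O(N^{-1/2})$ uniformly for $n\gamma<T$, and then to apply \Cref{lem:discr_Gr}. The inequality $\mathbb E[\mathcal W_2(\xsingle_n,\mfsingle_n)]\le c_n$ then follows because $\tfrac1N\sum_{j=1}^N\delta_{(X_n^{(j)},Z_n^{(j)})}$ is a coupling of $\xsingle_n$ and the empirical measure $\mirrormfsingle_n$ of the i.i.d.\ $\mfsingle_n$-sample $\{Z_n^{(j)}\}_j$, so that $\mathcal W_2(\xsingle_n,\mirrormfsingle_n)^2\le\tfrac1N\sum_j\|X_n^{(j)}-Z_n^{(j)}\|^2$ pathwise; an application of Jensen's inequality, together with the fact that the $Z_n^{(j)}$ are i.i.d.\ from $\mfsingle_n$ for the passage from $\mirrormfsingle_n$ to $\mfsingle_n$, finishes this part (this last passage is the only place where one must be a little careful about what precisely is being compared).

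For the one-step estimate, I would subtract \eqref{eq:SVGD_mf_iid} from \eqref{eq:SVGD_interacting} and insert the auxiliary drift $R_{\mirrormfsingle_n}$ built from the \emph{random} MMF empirical measure:
\[
X_{n+1}^{(j)}-Z_{n+1}^{(j)}=(X_n^{(j)}-Z_n^{(j)})-\gamma\big(R_{\xsingle_n}(X_n^{(j)})-R_{\mirrormfsingle_n}(Z_n^{(j)})\big)-\gamma\big(R_{\mirrormfsingle_n}(Z_n^{(j)})-R_{\mfsingle_n}(Z_n^{(j)})\big).
\]
The first bracket (a ``stability'' term) is handled by \Cref{lem:Lipschitz_drift}, which gives $\|R_{\xsingle_n}(X_n^{(j)})-R_{\mirrormfsingle_n}(Z_n^{(j)})\|\le\clip\big(\|X_n^{(j)}-Z_n^{(j)}\|+\mathcal W_2(\xsingle_n,\mirrormfsingle_n)\big)$, and $\mathcal W_2(\xsingle_n,\mirrormfsingle_n)\le(\tfrac1N\sum_k\|X_n^{(k)}-Z_n^{(k)}\|^2)^{1/2}$ by the index coupling above. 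The second bracket (a ``statistical fluctuation'' term) is, by the explicit form of $R_\rho$, the centred empirical average $-\tfrac1N\sum_k\big(g(Z_n^{(k)},z)-\int g(x,z)\,\mfsingle_n(\dd x)\big)$ evaluated at $z=Z_n^{(j)}$, where $g(x,z)=k(x,z)\nabla\log\pi(x)+\nabla_1k(x,z)$; conditioning on $Z_n^{(j)}$ this is a centred average of i.i.d.\ terms up to a lower-order self-interaction correction that, via the Lipschitz bound on $g$, also scales with $\var(\mfsingle_n)$, and since $g(\cdot,z)$ is Lipschitz uniformly in $z$ under \Cref{ass:kernel2} the identity ``variance equals half the mean squared pairwise distance'' yields $\mathbb E_0\|R_{\mirrormfsingle_n}(Z_n^{(j)})-R_{\mfsingle_n}(Z_n^{(j)})\|^2\le C\,\var(\mfsingle_n)/N$ with $C$ depending only on the constants of \Cref{ass:kernel2}.

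Taking $L_0^2(\R)$-norms, applying Minkowski's inequality over $\omega$ and then over the index $j$, and using $\|\mathcal W_2(\xsingle_n,\mirrormfsingle_n)\|_{L_0^2(\R)}\le c_n$, one obtains $c_{n+1}\le(1+2\gamma\clip)c_n+\gamma C\sqrt{\var(\mfsingle_n)/N}$ without ever squaring or expanding cross terms. A short side computation — \Cref{lem:Lipschitz_drift} with $\rho_1=\rho_2=\mfsingle_n$, together with \eqref{eq:moment_iterate} — shows $\var(\mfsingle_{n+1})\le(1+\gamma\clip)^2\var(\mfsingle_n)$, hence $\var(\mfsingle_n)\le\var(\mfsingle_0)e^{2\clip\,n\gamma}\le\var(\mfsingle_0)e^{2\clip T}$ whenever $n\gamma<T$, so $b_n\le\gamma C\sqrt{\var(\mfsingle_0)}\,e^{\clip T}/\sqrt N$ uniformly in this range. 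Then \Cref{lem:discr_Gr} (with $c_0=0$ and effective constant $2\clip$) gives $c_n\le\frac{C\sqrt{\var(\mfsingle_0)}\,e^{\clip T}}{2\clip\sqrt N}\big(e^{2\clip n\gamma}-1\big)$, and choosing $A$ in the statement large enough in terms of $\clip$ and $C$ brings this into the claimed form $\tfrac12\big(N^{-1/2}\sqrt{\var(\mfsingle_0)}\,e^{AT}\big)(e^{2AT}-1)$.

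The \textbf{main obstacle} is the fluctuation term. One must (i) deal carefully with the fact that $R_{\mirrormfsingle_n}$ is evaluated at one of its own sample points $Z_n^{(j)}$, so it is only \emph{approximately} an unbiased Monte Carlo estimator of $R_{\mfsingle_n}(Z_n^{(j)})$ — the $O(1/N)$ bias is harmless but must be shown to scale with $\var(\mfsingle_n)$ (so that the bound degenerates to $0$ when $\mfsingle_0$ is a point mass, as it should); (ii) extract the $\var(\mfsingle_n)/N$ scaling, which forces the auxiliary estimate on the growth of $\var(\mfsingle_n)$ along the flow; and (iii) ensure the constants in the recursion do not degenerate when one passes through the $\mathcal W_2(\xsingle_n,\mirrormfsingle_n)$ bound (which is what roughly doubles the effective Lipschitz constant, from $\clip$ to $2\clip$). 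Everything else — the Minkowski/Cauchy–Schwarz bookkeeping, the variance recursion, and the invocation of \Cref{lem:discr_Gr} — is routine.
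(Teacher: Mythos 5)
The paper gives no proof of this proposition: it is imported verbatim as Proposition~7 of \cite{NEURIPS20203202111c}, so there is nothing in-paper to compare against. Your reconstruction is, however, essentially the argument of that reference: synchronous coupling of the interacting particles with the mirrored mean-field particles, insertion of the intermediate drift $R_{\mirrormfsingle_n}$ to split the one-step error into a Lipschitz-stability part (handled by \Cref{lem:Lipschitz_drift} together with the index-coupling bound $\mathcal W_2(\xsingle_n,\mirrormfsingle_n)^2\le\frac1N\sum_j\|X_n^{(j)}-Z_n^{(j)}\|^2$) and a Monte Carlo fluctuation part of size $\sqrt{\var(\mfsingle_n)/N}$ (including the $O(1/N)$ self-interaction correction), a geometric bound on the growth of $\var(\mfsingle_n)$ along the flow, and finally \Cref{lem:discr_Gr}. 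Absorbing $\clip$ and the fluctuation constant into a single $A$ recovers the stated constant exactly, as you indicate. This is the correct and standard route, and it is the one the cited reference takes.

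The one point you flag but do not resolve is in fact unresolvable as the statement is literally written: your coupling controls $\mathbb E[\mathcal W_2(\xsingle_n,\mirrormfsingle_n)]$, not $\mathbb E[\mathcal W_2(\xsingle_n,\mfsingle_n)]$, and the passage from the empirical measure $\mirrormfsingle_n$ of the i.i.d.\ sample to its law $\mfsingle_n$ does not follow from Jensen at rate $N^{-1/2}$ --- empirical measures of $N$ i.i.d.\ points converge to their law in $\mathcal W_2$ only at rate $N^{-1/d}$ for $d>4$. Indeed the first inequality is false as written: at $n=0$ one has $c_0=0$ while $\mathbb E[\mathcal W_2(\xsingle_0,\mfsingle_0)]>0$ whenever $\mfsingle_0$ is non-atomic. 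In \cite{NEURIPS20203202111c} the corresponding statement compares the two \emph{empirical} measures, and everywhere the present paper invokes the proposition it only uses the bound on $c_n$ (the passage to $\mfsingle_n[\varphi]$ for Lipschitz $\varphi$ is done separately via Marcinkiewicz--Zygmund in the proof of \Cref{prop:err_MC}). So read the first ``$\le$'' with $\mirrormfsingle_n$ in place of $\mfsingle_n$; the substantive second inequality is correctly established by your argument.
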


\section{Multilevel Stein Variational gradient descent}\label{sec:MLSVGD}

In the following section, we propose a novel multilevel SVGD approach by applying ideas of MLMC methods for approximating expectations w.r.t.~the MF limit. We start the discussion by the viewpoint of standard single level approximations. 

\subsection{Single level approximation}\label{sec:MC_approx}
We aim to construct an estimator of the expectation over some functional of interest $\varphi:\R^d\to\R$ w.r.t.~$\mfsingle_n$, denoted by
\[\mfsingle_n[\varphi] := \int_{\R^d}\varphi(x)\,\mfsingle_n({\mathrm d}x)\, ,\]
where $(\rho_n)_{n\ge0}$ evolves through the MF equation \eqref{eq:SVGD_mf_measure}. We %
work under
the following assumption on $\varphi$.
\begin{assumption}\label{ass:phi}
    Let $\varphi:\R^d\to \R$ be Lipschitz-continuous with $\alip>0$ such that
    $|\varphi(x)-\varphi(y)|\le \alip \|x-y\|$.
\end{assumption}
Given %
$\varphi$, we define %
an the estimator of $\mfsingle_n[\varphi]$ by
\begin{equation}\label{eq:MC}
  \xsingle_n[\varphi] = \int_{\R^d} \varphi(x)\,\xsingle_n({\mathrm d}x) = \frac1{N}\sum_{i=1}^N \varphi(X_n^{(i)}),
\end{equation}
where $\xsingle_n = \frac1N\sum_{i=1}^N \delta_{X_n^{(i)}}$ and $\{X_n^{(j)},i=1,\dots,N\}_{n\ge0}$ evolves through \eqref{eq:SVGD_discr} with i.i.d.~initial ensemble $X_0^{(i)}\sim\mfsingle_0$, $i=1,\dots,N$. The convergence of the proposed estimator can be verified through the MF limit presented in Proposition~\ref{prop:MF}.

\begin{proposition}\label{prop:err_MC}
  Let $\varphi$ satisfy Assumption~\ref{ass:phi}. 
    Then %
    for all $n\le T/\gamma$
    \[\|\xsingle_n[\varphi]-\mfsingle_n[\varphi]\|_{L_0^2(\R)}=\mathbb E_{0}[ |\xsingle_n[\varphi]-\mfsingle_n[\varphi]|^2]^{\frac12}\le \frac{C}{\sqrt{N}},\]
    where $C>0$ is a constant depending on $\pi$, $k$, $\alip$ and $T$ but independent of $N$.
\end{proposition}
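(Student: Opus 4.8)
The plan is to bound the error by splitting it via the triangle inequality into a "bias-type" term measuring the distance between the interacting-particle empirical average and the mirrored mean-field empirical average, plus a "variance-type" Monte Carlo term for i.i.d.\ samples of $\mfsingle_n$. Concretely, I would write
\[
\xsingle_n[\varphi]-\mfsingle_n[\varphi]
= \Big(\frac1N\sum_{i=1}^N \varphi(X_n^{(i)}) - \frac1N\sum_{i=1}^N \varphi(Z_n^{(i)})\Big)
 + \Big(\frac1N\sum_{i=1}^N \varphi(Z_n^{(i)}) - \mfsingle_n[\varphi]\Big),
\]
where the $Z_n^{(i)}$ are the mirrored mean-field particles from \eqref{eq:SVGD_mf_iid}, coupled to the $X_n^{(i)}$ through the common initialization on $(\Omega,\mathcal F,\mathbb P_0)$. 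Taking $\|\cdot\|_{L_0^2(\R)}$ and using the triangle inequality reduces the task to bounding each bracket separately.

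For the first bracket, I would use Lipschitz continuity of $\varphi$ (Assumption~\ref{ass:phi}) pointwise to get $|\varphi(X_n^{(i)})-\varphi(Z_n^{(i)})|\le \alip\|X_n^{(i)}-Z_n^{(i)}\|$, then apply the triangle inequality in $L^2_0(\R)$ and Jensen/Cauchy--Schwarz to obtain
\[
\Big\|\tfrac1N\sum_{i=1}^N \big(\varphi(X_n^{(i)})-\varphi(Z_n^{(i)})\big)\Big\|_{L^2_0(\R)}
\le \alip\Big(\tfrac1N\sum_{i=1}^N \E_0[\|X_n^{(i)}-Z_n^{(i)}\|^2]\Big)^{1/2}
= \alip\, c_n,
\]
with $c_n$ exactly the quantity appearing in Proposition~\ref{prop:MF}; that proposition then bounds this by $\alip$ times a constant depending on $\pi$, $k$, $T$, divided by $\sqrt N$. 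For the second bracket, since $Z_n^{(1)},\dots,Z_n^{(N)}$ are i.i.d.\ with law $\mfsingle_n$ (as noted for the MMF system) and $\varphi(Z_n^{(i)})$ has finite variance — here I would invoke $\rho_n\in\mathcal P_2(\R^d)$ from \eqref{eq:moment_iterate} together with the Lipschitz bound $|\varphi(x)|\le |\varphi(0)|+\alip\|x\|$ to ensure $\varphi\in L^2(\mfsingle_n)$ — the standard Monte Carlo variance identity gives $\|\frac1N\sum_i \varphi(Z_n^{(i)})-\mfsingle_n[\varphi]\|_{L^2_0(\R)} = N^{-1/2}\sqrt{\var_{\mfsingle_n}(\varphi)} \le N^{-1/2}\alip\sqrt{\var(\mfsingle_n)}$. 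One then needs a uniform-in-$n$ (for $n\le T/\gamma$) bound on $\var(\mfsingle_n)$, which follows from a Gronwall-type propagation-of-moments estimate using \Cref{lem:Lipschitz_drift}; this may already be available from \cite{NEURIPS20203202111c} or can be derived in a line or two via \Cref{lem:discr_Gr}.

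Combining the two bounds and absorbing all $n$- and problem-dependent constants (using $n\le T/\gamma$) into a single constant $C=C(\pi,k,\alip,T)$ yields the claimed $C/\sqrt N$ bound. The main obstacle, such as it is, is the second term: one must argue carefully that $\varphi(Z_n^{(i)})$ is square-integrable and that its variance is controlled uniformly over $n\le T/\gamma$ — i.e.\ that the second moments of $\mfsingle_n$ do not blow up over the finite time horizon. Everything else is a routine combination of the triangle inequality, Lipschitz continuity of $\varphi$, and the already-established mean-field convergence rate in Proposition~\ref{prop:MF}.
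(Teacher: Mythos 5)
Your proposal is correct and follows essentially the same route as the paper: the identical decomposition into an interacting-particle-vs-mirrored-mean-field term (bounded via Lipschitz continuity of $\varphi$, Jensen's inequality, and Proposition~\ref{prop:MF}) plus an i.i.d.\ Monte Carlo fluctuation term (the paper invokes the Marcinkiewicz--Zygmund inequality, which for $p=2$ is your elementary variance identity), with the uniform bound on $\Var(\mfsingle_n)$ for $n\le T/\gamma$ supplied by \cite[Lemma~12]{NEURIPS20203202111c} exactly as you anticipate.
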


\subsection{Multilevel approximation}

Implementing the iterative scheme \eqref{eq:SVGD_discr} %
requires %
repeated evaluation of
$V = -\log \pi$ for the target measure $\pi$. However, in certain
applications only numerical approximations $\nabla \log \pi_\ell$ or
$\pi_\ell$ to $\nabla \log\pi$ and $\pi$ respectively are
available. %
Here $\ell\in\N_0$ is the
``level'' of the approximation,
which is assumed to be associated with its accuracy:
the larger $\ell$ the better the
approximation.
To be more precise, we work under the following
assumption. Without loss of generality and to
  keep the notation succinct, we use the same
  notation for the occurring
constants as in Assumption \ref{ass:kernel2}.
\begin{assumption}\label{ass:level_accuracy}
  There exist finite and positive constants $M$, $C_V$, $\beta$,
    $\Capp$ such that:
    \begin{enumerate}
    \item[B1] %
      The Hessian of $V_\ell = -\log\pi_\ell$ is %
      uniformly bounded, i.e.\ $\|\nabla^2V_\ell(x)\|\le M$
      for all $x\in\R^d$ and all $\ell\in\N_0$.
  \item[B2] %
    The gradients of %
    $V_\ell = -\log \pi_\ell$
    are uniformly bounded, i.e.\
    $\|\nabla \log\pi_\ell(x)\| = \|\nabla V_\ell(x)\| \le C_V$ for all $x\in\R^d$ and all $\ell\ge0$.
  \item[B3] %
    The $\nabla\pi_\ell$ satisfy
    \begin{equation}\label{eq:app1}
      \|\nabla\log\pi_\ell(x) - \nabla\log\pi(x)\|\le \Capp 2^{-\beta\ell},
    \end{equation}
    for all $x\in\R^d$ and all $\ell\in\N_0$.
    \end{enumerate}
  \end{assumption}
  \begin{remark}
    Using the triangle inequality, \eqref{eq:app1} implies
    \begin{equation}\label{eq:app2}
      \|\nabla\log\pi_\ell(x) - \nabla\log\pi_{\ell-1}(x)\|\le 2\Capp 2^{-\beta\ell}
    \end{equation}
    for all $\ell\in\N$.
  \end{remark}
Similar as in \Cref{sec:SVGD},
we introduce again several stochastic dynamical systems,
which will be required for the definition and analysis
of our multilevel scheme.
\begin{enumerate}
\item {\bf Interacting particles:} For each $\ell\in\N_0$
  we let
  $\mathcal X_n^\ell = \{X_n^{\ell,(j)},j=1,\dots,N_\ell\}_{n\ge0}$
  be a particle system of ensemble size $N_\ell\in\N$ evolving through
\begin{equation}\label{eq:SVGD_level} 
\begin{split}
X_{n+1}^{\ell,(i)} &= X_n^{\ell,{(i)}} - \gamma P_{\xell_n}\nabla \log\left(\frac{\xell_n}{\pi_\ell}\right)(X_n^{\ell,{(i)}}) \\ &=: X_n^{\ell,{(i)}}-\gamma \widehat R_n^\ell(X_n^{\ell,{(i)}})
\end{split}
\end{equation}
with i.i.d.~initialization %
$X_0^{\ell,{(i)}}\sim\mfsingle_0$,
$i=1,\dots,N_\ell$, $\ell\in\N_0$.
The corresponding %
empirical measure is denoted by
$\xell_n :=
\frac{1}{N_\ell}\sum_{j=1}^{N_\ell}\delta_{X_n^{\ell,(j)}}$.

\item {\bf Mean field:} For each $\ell\in\N_0$,
  the corresponding MF system is given by
 \begin{equation}\label{eq:MF_level}
 \begin{split}
 Z_{n+1}^{\ell} &= Z_n^{\ell} - \gamma P_{\mfell_n}\nabla\log\left(\frac{\mfell_n}{\pi_\ell}\right)(Z_n^{\ell}) \\ &= Z_n^\ell - \gamma R_n^\ell(Z_n^\ell),\quad R_n^\ell(\cdot) := R_{\mfell_n}(\cdot),
 \end{split}
 \end{equation}
 where $\mfell_n$ denotes the law of the random variable $Z_n^\ell$, i.e.~$(\mfell_n)_{n\ge0}$ solves the MF equation
 \begin{equation}
    \mfell_{n+1} = \left(I-\gamma P_{\mfell_n}\nabla\log\left(\frac{\mfell_n}{\pi_\ell}\right)\right)_{\sharp} \mfell_n,
  \end{equation}
  with i.i.d.~initialization $Z_0^{\ell}\sim \rho_0$, $\ell\in\N_0$.
\item {\bf Mirrored mean field:} For each $\ell\in\N_0$,
  we consider $\mathcal Z_n^{\ell} = \{Z_n^{\ell,(j)},\ j=1,\dots,N_\ell\}$, $n\in\N$ with empirical measure $\mirrormfell_n = \frac{1}{N_\ell}\sum_{j=1}^N \delta_{Z_n^{\ell,(j)}}$.
  It mirrors the MF system in the sense
    \begin{equation}\label{eq:MF_high}
      Z_{n+1}^{\ell,(i)} = Z_{n}^{\ell,(i)} - \gamma R_n^\ell(Z_{n}^{\ell,(i)})
    \end{equation}
    with initialization equal to the $X_0^{(i)}$, meaning
    \begin{equation*}
      Z_{0}^{\ell,(i)}(\omega) = X_0^{\ell,(i)}(\omega)
      \qquad
      \text{for all }\omega\in\Omega,~i=1,\dots,N_\ell.
    \end{equation*}
\end{enumerate}

Except for the initialization, the above systems are analogue to the
ones in Section \ref{sec:SVGD}, but using the dynamics driven by
$\pi_\ell$ instead of $\pi$. Additionally, we'll require two more
auxiliary dynamics:

\begin{enumerate}\setcounter{enumi}{3}
\item {\bf Auxiliary interacting particles:}  
  For $\ell\in\N_0$ we let
  $\widetilde{\mathcal X}_n^{\ell}=\{ \tilde X_n^{\ell,(j)},j=1,\dots,N_{\ell+1}\}$,
  be defined through
 \begin{equation}\label{eq:SVGD_level2} 
 \begin{split}
   \widetilde X_{n+1}^{\ell,(i)} &=  
    \widetilde X_n^{\ell,(i)} - \gamma P_{\xelltilde_n}\nabla \log\left(\frac{\xelltilde_n}{\pi_\ell}\right)(\widetilde X_n^{\ell,{(i)}}) \\ &=: \widetilde X_n^{\ell,(i)} - \gamma \widetilde{\widehat R}_n^{\ell}(\widetilde X_n^{\ell,(i)})\quad i=1,\dots,N_{\ell+1}
     \end{split}
 \end{equation}
 with initialization
 \begin{equation}\label{eq:tildeXellinit}
   \widetilde X_0^{\ell,(i)}(\omega) = X_0^{\ell+1,{(i)}}(\omega), \quad i=1,\dots,N_{\ell+1}
 \end{equation}
 for all $\omega\in\Omega$.
 The corresponding empirical measure is denoted by
 $\xelltilde_n = \frac1{N_{\ell+1}}\sum_{j=1}^{N_{\ell+1}}
 \delta_{\widetilde X_n^{\ell,(j)}}$, $\ell=0,\dots,L-1$.

\item {\bf Auxiliary mirrored mean field:}
    For $\ell\in\N_0$ we let
$\widetilde{\mathcal Z}^\ell_n=\{\widetilde Z_n^{\ell,(j)},j=1,\dots,N_{\ell+1}\}_{n\ge0}$ 
be defined as
    \begin{equation}\label{eq:MF_low}    
      \widetilde Z_{n+1}^{\ell,(i)} = \widetilde Z_{n}^{\ell,(i)} - \gamma R_n^\ell(\widetilde Z_{n}^{\ell,(i)})\quad i=1,\dots,N_{\ell+1}
\end{equation}
with initialization
\begin{equation}\label{eq:tildeZellinit}
  \widetilde Z_{0}^{\ell,(i)}(\omega) = X_0^{\ell+1,(i)}(\omega)
\quad i=1,\dots,N_{\ell+1}
    \end{equation}
    for all $\omega\in\Omega$. As before, note that $R_n^{\ell}$ in
 \eqref{eq:MF_level} independent of the states
 $\widetilde Z_n^{\ell,(j)}$, $j=1,\dots,N_{\ell+1}$.    
    We denote the corresponding
    measure by
    $\mirrormfelltilde_n = \frac{1}{N_{\ell+1}}\sum_{j=1}^{N_{\ell+1}}
    \delta_{\widetilde Z_n^{(j)}}$. 
\end{enumerate}

Similar as in Section~\ref{sec:SVGD}, we consider $(\mathcal X_n^\ell)_{n\ge0}$, $(\widetilde{\mathcal X}_n^\ell)_{n\ge0}$, $(\mathcal Z_n^\ell)_{n\ge0}$ and $(\widetilde{\mathcal Z}_n^\ell)_{n\ge0}$ as stochastic processes on the same probability space $(\Omega,\mathcal F,\mathbb P_0)$ under which the initial conditions described above are satisfied for all $\omega\in\Omega$.

\begin{remark}\label{rem:mirror_particles}
  By definition $\mathcal Z_n^{\ell}$ is an i.i.d.~sample of
  $\mfell_n$, and in particular each $Z_n^{\ell,(j)}$,
  $j=1,\dots,N_\ell$, is a random variable with the same distribution
  as $Z_n^\ell$. Similarly, %
  $\widetilde{\mathcal Z_n^\ell}$ is an i.i.d.~sample %
  of $\mfell_n$, however, of size $N_{\ell+1}$. Hence, %
  the joint random variable
    $(Z_n^{\ell,(i)},\widetilde{Z}_n^{\ell-1,(i)})_{i=1,\dots,N_\ell}$ %
    has marginals $Z_n^{\ell,(i)}\sim\mfell_n$ and
  $\widetilde Z_n^{\ell-1,(i)}\sim\mfellminus_n$. It is important to
  note that due to the initial condition these random variables are correlated which will be a
  crucial %
  to obtain a
  variance reduction %
  for our multilevel
  mean-field estimator.
\end{remark}

We again summarize the introduced systems in \Cref{table:2}.

\begin{table}[htb!]
  \begin{center}
  {\footnotesize
\begin{tabular}{|l|c|c|c|c|c|} 
\hline
 \ & Notation & Evolution & Initialization & Distribution & Size \\
  \hline
  IP & $(\mathcal X_n^\ell,\xell_n)$ & \eqref{eq:SVGD_level}  & $X_0^{\ell,(j)}\sim \rho_0$ & $\xell_n = \frac{1}{N_\ell}\sum_{j=1}^{N_\ell} \delta_{X_n^{\ell,(j)}}$  & $N_{\ell}$  \\  
  MF & $(Z_n^\ell,\mfell_n)$ & \eqref{eq:MF_level} & $Z_0^\ell \sim \rho_0$ & $Z_n^\ell\sim\mfell_n$  & 1  \\
  MMF  & $(\mathcal Z_n^\ell,\mirrormfell_n)$ & \eqref{eq:MF_high} & $Z_0^{\ell,(j)} = X_0^{\ell,(j)}$ & $\mirrormfell_n = \frac{1}{N_\ell}\sum_{j=1}^{N_\ell} \delta_{Z_n^{\ell,(j)}}$ & $N_{\ell}$  \\
  Auxiliary IP & $(\widetilde{\mathcal X}_n^\ell,\xelltilde_n)$ & \eqref{eq:SVGD_level2} & $\tilde X_0^{\ell,(j)} = X_0^{\ell+1,(j)}$ & $\xelltilde_n = \frac{1}{N_{\ell+1}}\sum_{j=1}^{N_{\ell+1}} \delta_{\widetilde X_n^{\ell,(j)}}$  & $N_{\ell+1}$ \\  
  Auxiliary MMF  & $(\widetilde{\mathcal Z}_n^\ell,\mirrormfelltilde_n)$ & \eqref{eq:MF_low} & $\tilde Z_0^{\ell,(j)} = X_0^{\ell+1,(j)}$ & $\mirrormfelltilde_n = \frac{1}{N_{\ell+1}}\sum_{j=1}^{N_{\ell+1}} \delta_{\widetilde Z_n^{\ell,(j)}}$ & $N_{\ell+1}$ \\
  \hline
\end{tabular}
\caption{Overview of the of the five particle dynamics introduced in Section \ref{sec:MLSVGD}: mean field (MF), mirrored mean field (MMF), interacting particle (IP) and the auxiliary systems.}\label{table:2}}
\end{center}
\end{table}

We are now in position to introduce our multilevel SVGD scheme:
As common in multilevel algorithms, the idea is to
use a telescoping sum between consecutive levels to reduce the variance.
To this end we
combine particle systems generated with the same initials, but applied on different accuracy levels $\ell$. Given accuracy levels $\ell = 0,\dots,L$ for some $L\ge 1$, we define the following multilevel estimator for $\mfsingle_n[\varphi]$ by
 \begin{equation}\label{eq:ML_estimator}
     \widehat\mfsingle_n^{\ML}[\varphi] := \xellzero_n[\varphi] + \sum_{\ell=1}^L\left(\xell_n[\varphi]-\xelltildeminus_n[\varphi]\right).
 \end{equation}

To analyze it, we work under the following assumption regarding the
computational cost:
 \begin{assumption}\label{ass:cost}
   There exists $q\ge 0$ such that
   the generation of $\mathcal{X}_n^{\ell} = \{X_n^{\ell,(j)},j=1,\dots,N_\ell\}$ defined %
   in \eqref{eq:SVGD_level} %
   has computational cost
    \[\cost(\mathcal{X}_n^\ell) = n\cdot N_\ell \cdot 2^{q\ell}.\]
\end{assumption}

We emphasize that the generation of
$\widetilde{\mathcal{X}}_n^\ell = \{\widetilde X_n^{\ell,(i)},i=1,\dots,N_{\ell+1}\}_{n\ge0}$ in \eqref{eq:SVGD_level2} %
involves
$N_{\ell+1}$ additional evaluations of $\log\pi_\ell$. Thus, %
the computational cost of the estimator $\widehat\mfsingle_n^{\ML}[\varphi]$ is given by
\begin{align*} 
\cost_{\ML} &= n\cdot \left(\sum_{\ell = 0}^L N_\ell \cdot 2^{q\ell} + \sum_{\ell = 1}^L N_\ell \cdot 2^{q(\ell-1)} \right) \\ &\le n\cdot 2^{1-q} \sum_{\ell = 0}^L N_\ell \cdot 2^{q\ell}
\end{align*}
 In Figure~\ref{fig:ML_sketch} in the appendix, we present an illustration of this multilevel particle approximation. In the remaining part of this paper, we will show how this multilevel construction achieves a significant complexity %
reduction compared to the single level estimator.

\section{Convergence analysis}\label{sec:convergence}
\subsection{Main result}\label{ssec:main}
We will prove the following error bound by employing a series of auxiliary results, which have been relocated to Section~\ref{ssec:auxres} for better organization and clarity.
\begin{theorem}[Error bound]\label{thm:err_ML}
    Under Assumption~\ref{ass:kernel2}, \ref{ass:phi}, and \ref{ass:level_accuracy}, we obtain for all $n\le T/\gamma$
    \begin{equation*} 
    \begin{split} 
    &\|\widehat\mfsingle_n^{\ML}[\varphi]- \mfsingle_n[\varphi]\|_{L_0^2(\R)} = \E_0[|\widehat\mfsingle_n^{\ML}[\varphi]- \mfsingle_n[\varphi]|^2]^{\frac12}\\ & \lesssim (L+1)\left(\frac{1}{\sqrt{N_0}} + \sum_{\ell=1}^L \frac{2^{-\beta\ell}}{\sqrt{N_\ell}}\right) + 2^{-\beta L}, 
    \end{split}
    \end{equation*}
    where the constants depend on $\varphi$, $T$ and the constants
      in Assumptions~\ref{ass:kernel2} and \ref{ass:level_accuracy},
    but are independent of $N_\ell$, $\ell=0,\dots L$ and $L>0$. 
  \end{theorem}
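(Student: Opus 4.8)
The plan is to control the multilevel error by a telescoping-plus-triangle-inequality argument, reducing the problem to three types of contributions: (i) a level-$0$ sampling error, (ii) per-level difference errors $\xell_n[\varphi]-\xelltildeminus_n[\varphi]$, and (iii) a deterministic bias from using $\pi_L$ instead of $\pi$. First I would write
\[
\widehat\mfsingle_n^{\ML}[\varphi] - \mfsingle_n[\varphi]
= \big(\xellzero_n[\varphi]-\mfsingle_n[\varphi]\big) + \sum_{\ell=1}^L\big(\xell_n[\varphi]-\xelltildeminus_n[\varphi]\big),
\]
and take $\|\cdot\|_{L_0^2(\R)}$, using the triangle inequality to get $(L+1)$-many terms whose norms I bound separately; the factor $(L+1)$ in the statement comes exactly from not exploiting cancellation across levels. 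For the level-$0$ term I would first replace $\mfsingle_n[\varphi]$ by $\mfellzero_n[\varphi]$ (the MF limit at level $0$), controlling $|\mfellzero_n[\varphi]-\mfsingle_n[\varphi]|$ by $\alip\,\cW_2(\mfellzero_n,\mfsingle_n)$ and invoking the stability-of-the-MF-system-under-changes-of-$\pi$ result (announced in the contributions and presumably among the auxiliary results in Section~\ref{ssec:auxres}), which should give a bound of order $2^{-\beta\cdot 0}=1$ times a constant — harmless at level $0$; then $|\xellzero_n[\varphi]-\mfellzero_n[\varphi]|$ is a single-level error handled exactly as in Proposition~\ref{prop:err_MC} (with $\pi$ replaced by $\pi_0$, which still satisfies Assumption~\ref{ass:kernel2}-type bounds by Assumption~\ref{ass:level_accuracy}), giving $C/\sqrt{N_0}$.

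For each level-$\ell$ difference term I would insert the mirrored mean-field empirical measures as intermediaries:
\[
\xell_n[\varphi]-\xelltildeminus_n[\varphi]
= \big(\xell_n[\varphi]-\mirrormfell_n[\varphi]\big) + \big(\mirrormfell_n[\varphi]-\mirrormfelltildeminus_n[\varphi]\big) + \big(\mirrormfelltildeminus_n[\varphi]-\xelltildeminus_n[\varphi]\big).
\]
The first and third pieces are IP-vs-MMF discrepancies at levels $\ell$ and $\ell-1$ respectively; by the coupling on $(\Omega,\cF,\PP_0)$ (shared initial conditions) and Proposition~\ref{prop:MF} / its proof, each is $\lesssim \alip\,(\frac1{N_\ell}\sum_j\E_0\|X_n^{\ell,(j)}-Z_n^{\ell,(j)}\|^2)^{1/2}\lesssim 1/\sqrt{N_\ell}$ (and similarly $1/\sqrt{N_\ell}$ for the auxiliary system of size $N_\ell$, noting $\widetilde{\mathcal X}_n^{\ell-1}$ has $N_\ell$ particles). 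The middle piece is the crucial variance-reduction term: $\mirrormfell_n$ and $\mirrormfelltildeminus_n$ are empirical measures of i.i.d.\ samples from $\mfell_n$ and $\mfellminus_n$ respectively, built from the \emph{same} underlying randomness $X_0^{\ell,(i)}$ (Remark~\ref{rem:mirror_particles}), so I would write $\mirrormfell_n[\varphi]-\mirrormfelltildeminus_n[\varphi] = \frac1{N_\ell}\sum_{i=1}^{N_\ell}\big(\varphi(Z_n^{\ell,(i)})-\varphi(\widetilde Z_n^{\ell-1,(i)})\big)$, center it by subtracting its mean $\mfell_n[\varphi]-\mfellminus_n[\varphi]$, and bound the centered sum's $L^2$ norm by $\frac{1}{\sqrt{N_\ell}}\,\|\varphi(Z_n^{\ell,(1)})-\varphi(\widetilde Z_n^{\ell-1,(1)})\|_{L_0^2}$ (i.i.d.\ pairs), plus the deterministic term $|\mfell_n[\varphi]-\mfellminus_n[\varphi]|$. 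Both of these are then controlled by $\alip$ times a Wasserstein-type or pathwise distance between the level-$\ell$ and level-$(\ell-1)$ MF dynamics, which by a discrete Gronwall argument (Lemma~\ref{lem:discr_Gr}) driven by the per-step perturbation $\|\nabla\log\pi_\ell-\nabla\log\pi_{\ell-1}\|_\infty\le 2\Capp 2^{-\beta\ell}$ from \eqref{eq:app2} yields a bound of order $2^{-\beta\ell}$. Hence the middle piece is $\lesssim 2^{-\beta\ell}/\sqrt{N_\ell} + 2^{-\beta\ell}\lesssim 2^{-\beta\ell}/\sqrt{N_\ell}$ when combined appropriately — but I must be careful: the stray $2^{-\beta\ell}$ deterministic contributions telescope, since $\sum_{\ell=1}^L(\mfell_n[\varphi]-\mfellminus_n[\varphi]) = \mfellL_n[\varphi]-\mfellzero_n[\varphi]$, and only $|\mfellL_n[\varphi]-\mfsingle_n[\varphi]|\lesssim 2^{-\beta L}$ survives after also cancelling $\mfellzero_n[\varphi]$ against the level-$0$ term. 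This telescoping is where I would take care to organize the sum so that the final $2^{-\beta L}$ (rather than $\sum_\ell 2^{-\beta\ell}=O(1)$) appears.

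Collecting: the level-$0$ term contributes $1/\sqrt{N_0}$, each level-$\ell$ term contributes $2^{-\beta\ell}/\sqrt{N_\ell}$ (from the coupled $L^2$ fluctuation) plus $1/\sqrt{N_\ell}$-type IP-MMF terms — here I would double-check whether the bound is genuinely $2^{-\beta\ell}/\sqrt{N_\ell}$ or whether some $1/\sqrt{N_\ell}$ pieces remain; re-reading the theorem's $\sum_\ell 2^{-\beta\ell}/\sqrt{N_\ell}$, the claim is that the IP-vs-MMF discrepancies at adjacent levels also cancel to leading order (both $\mathcal X$ and $\widetilde{\mathcal X}$ at level $\ell-1$ vs level $\ell$ are close because the dynamics differ by $O(2^{-\beta\ell})$), so the right decomposition pairs $\xell_n-\mirrormfell_n$ at level $\ell$ against $\mirrormfelltildeminus_n-\xelltildeminus_n$ at level $\ell-1$ and bounds their \emph{difference} rather than each separately — this is the subtle point and the main obstacle. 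Concretely, the hard part is showing $\big\|(\xell_n[\varphi]-\mirrormfell_n[\varphi]) - (\xelltildeminus_n[\varphi]-\mirrormfelltildeminus_n[\varphi])\big\|_{L_0^2}\lesssim 2^{-\beta\ell}/\sqrt{N_\ell}$, i.e.\ that the particle-to-mean-field fluctuation is itself stable under the $O(2^{-\beta\ell})$ change of target; this requires a careful coupled recursion for $\E_0\|X_n^{\ell,(i)}-\widetilde X_n^{\ell-1,(i)}\|^2$ versus $\E_0\|Z_n^{\ell,(i)}-\widetilde Z_n^{\ell-1,(i)}\|^2$, combining the Lipschitz bound of Lemma~\ref{lem:Lipschitz_drift} (for the kernel/Stein operator part) with the perturbation bound \eqref{eq:app2} (for the change of potential), and then Gronwall. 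Once this estimate and its $N_\ell$-free analogues are in place, summing over $\ell=0,\dots,L$ with the triangle inequality produces the factor $(L+1)$ and the stated bound; I would close the argument by noting all constants are uniform in $N_\ell$ and $L$ since every application of Lemma~\ref{lem:discr_Gr} is over $n\le T/\gamma$ steps.
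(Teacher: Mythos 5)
Your overall architecture matches the paper's: you insert the mirrored mean-field systems as intermediaries, use i.i.d.\ centering (the paper invokes the Marcinkiewicz--Zygmund inequality) for the variance-reduction term $\mirrormfell_n[\varphi]-\mirrormfelltildeminus_n[\varphi]$, control the level-to-level drift by a Gronwall/perturbation argument of size $2^{-\beta\ell}$ (the paper's Propositions~\ref{prop:stability} and \ref{prop:stability_particles}), and isolate a bias $|\mfellL_n[\varphi]-\mfsingle_n[\varphi]|\lesssim 2^{-\beta L}$. Your telescoping of the deterministic means is a valid reorganization of the paper's cleaner three-way split $\widehat\mfsingle_n^{\ML}-\mirrormfsingle_n^{\ML}$, $\mirrormfsingle_n^{\ML}-\mfellL_n[\varphi]$, $\mfellL_n[\varphi]-\mfsingle_n[\varphi]$ (and it quietly repairs your earlier, too-casual remark that the $O(2^{-\beta\cdot 0})=O(1)$ level-zero bias is ``harmless''---it is only harmless because it cancels in the telescope, as you later note).

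The genuine gap is the step you yourself flag as ``the main obstacle'': the bound on the interacting-particle-versus-mirrored-mean-field fluctuations. You correctly observe that bounding $\xell_n[\varphi]-\mirrormfell_n[\varphi]$ and $\xelltildeminus_n[\varphi]-\mirrormfelltildeminus_n[\varphi]$ separately by Proposition~\ref{prop:MF} only yields $1/\sqrt{N_\ell}$ per level, which does not sum to the claimed bound, and you conjecture that one must instead bound the \emph{difference} of these fluctuations by $2^{-\beta\ell}/\sqrt{N_\ell}$. You do not prove this, and it is in fact not what the paper proves. The paper's Lemma~\ref{lem:MF_levels} establishes the absolute (unpaired) estimate $\E_0[\|X_n^{\ell,(1)}-Z_n^{\ell,(1)}\|^2]^{1/2}\lesssim 1/\sqrt{N_0}+\sum_{m=1}^{\ell}2^{-\beta m}/\sqrt{N_m}$, i.e.\ the level-$\ell$ fluctuation is \emph{not} $1/\sqrt{N_\ell}$ but an accumulated sum over all lower levels; this is obtained by a doubly nested induction (over levels and over time) on the coupled quantity $\Delta_n^{\ell}=\E_0[\|X_n^{\ell,(1)}-\widetilde X_n^{\ell-1,(1)}-(Z_n^{\ell,(1)}-\widetilde Z_n^{\ell-1,(1)})\|^2]^{1/2}$, using exactly the ingredients you name (Lemma~\ref{lem:Lipschitz_drift}, the perturbation bound \eqref{eq:app2}, Marcinkiewicz--Zygmund for the $1/\sqrt{N_\ell-1}$ gain, and Lemma~\ref{lem:discr_Gr}). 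With this lemma in hand the paper simply applies the triangle inequality level by level, and it is precisely this unpaired summation that produces the $(L+1)$ prefactor in the theorem---your proposed pairing, if it could be carried out, would give a bound without that factor, but as written it remains an unproven conjecture rather than a completed step. Until the coupled recursion for $\Delta_n^{\ell}$ (or an equivalent) is actually executed, the proof is incomplete at its central point.
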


  Next we present a ``single-level'' result. Its proof follows
  by similar arguments as Proposition~\ref{prop:err_MC}.
  \begin{theorem}
    Under Assumption~\ref{ass:kernel2} and \ref{ass:level_accuracy}, we obtain for all $n\le T/\gamma$
    \begin{equation*}
    \begin{split}
    \|\xellL_n[\varphi] - \mfsingle_n[\varphi]\|_{L_0^2(\R)}&=\E_0[|\xellL_n[\varphi] - \mfsingle_n[\varphi]|^2]^{\frac12}\\ &\lesssim \frac{1}{\sqrt{N_L}}+ 2^{-\beta L}\,
    \end{split}
    \end{equation*}
    where the constants depend on $\varphi$, $T$ and the constants
      in Assumptions~\ref{ass:kernel2} and \ref{ass:level_accuracy},
    but are independent of $N_L\in\N$ and $L>0$. 
  \end{theorem}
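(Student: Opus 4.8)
The plan is to decompose the error into two independent contributions: a statistical (Monte Carlo) error from using $N_L$ particles, and a bias (consistency) error from replacing $\pi$ by $\pi_L$. Writing the triangle inequality
\[
\|\xellL_n[\varphi] - \mfsingle_n[\varphi]\|_{L_0^2(\R)}
\le
\|\xellL_n[\varphi] - \mfellL_n[\varphi]\|_{L_0^2(\R)}
+ |\mfellL_n[\varphi] - \mfsingle_n[\varphi]|,
\]
I would handle the first term exactly as in the proof of Proposition~\ref{prop:err_MC}, but with the target $\pi$ replaced by $\pi_L$. Since Assumption~\ref{ass:level_accuracy}~(B1)--(B2) gives uniform Hessian and gradient bounds for every $V_\ell$ with the \emph{same} constants $M$ and $C_V$, the Lipschitz property of the drift (\Cref{lem:Lipschitz_drift}), the discrete Gronwall inequality (\Cref{lem:discr_Gr}) and the mean-field estimate (\Cref{prop:MF}) all apply verbatim with constants \emph{independent of $L$}. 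Coupling $\mathcal X_n^L$ to the mirrored mean-field system $\mathcal Z_n^L$ via the shared initialization $Z_0^{L,(i)}=X_0^{L,(i)}$ and using that $\mathcal Z_n^L$ is an i.i.d.\ sample of $\mfellL_n$, the same argument that yields the $C/\sqrt N$ bound in Proposition~\ref{prop:err_MC} gives
\[
\|\xellL_n[\varphi] - \mfellL_n[\varphi]\|_{L_0^2(\R)} \lesssim \frac{1}{\sqrt{N_L}}
\]
uniformly in $L$.

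For the bias term the plan is to compare the two \emph{mean-field} flows $(\mfellL_n)_{n\ge0}$ and $(\mfsingle_n)_{n\ge0}$, which start from the same $\rho_0$ but evolve under the drifts $R_{\rho}\nabla\log(\rho/\pi_L)$ and $R_{\rho}\nabla\log(\rho/\pi)$ respectively. Since $\varphi$ is $\alip$-Lipschitz, $|\mfellL_n[\varphi]-\mfsingle_n[\varphi]| \le \alip\,\mathcal W_2(\mfellL_n,\mfsingle_n)$, so it suffices to bound $\mathcal W_2(\mfellL_n,\mfsingle_n)$. I would set up a one-step recursion for $w_n := \mathcal W_2(\mfellL_n,\mfsingle_n)$ by pushing both measures forward through their respective maps and splitting the increment into (i) the difference coming from evaluating the \emph{same} drift family at the two measures, controlled by $\clip\, w_n$ via \Cref{lem:Lipschitz_drift}, and (ii) the difference coming from the two \emph{different} potentials at the \emph{same} measure, which is the term
\[
\Big\|\int_{\R^d} k(x,\cdot)\big(\nabla\log\pi_L(x)-\nabla\log\pi(x)\big)\,\rho(\dd x)\Big\|
\le B\,\Capp 2^{-\beta L}
\]
using the kernel bound (A3) and the approximation bound (B3). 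This yields $w_{n+1}\le (1+\gamma\clip) w_n + \gamma B\Capp 2^{-\beta L}$ with $w_0=0$, and \Cref{lem:discr_Gr} closes it to $w_n \le \frac{B\Capp}{\clip}(e^{n\gamma\clip}-1)2^{-\beta L} \lesssim 2^{-\beta L}$ for $n\le T/\gamma$. Combining the two bounds gives the claim.

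The main obstacle is making step~(ii) of the recursion rigorous, i.e.\ bounding the Wasserstein distance between two pushforwards of the same measure under maps $I-\gamma R$ and $I-\gamma\widetilde R$ by $\gamma\sup\|R-\widetilde R\|$ \emph{and} simultaneously absorbing the drift-Lipschitz term, when the two drifts $R_n^L$ and $R_n$ are defined relative to \emph{different} measures $\mfellL_n$ and $\mfsingle_n$. The clean way is to use the identity coupling at each step: couple $\mfellL_n$ and $\mfsingle_n$ optimally, push the coupled pair forward through the two maps, and estimate $\|(I-\gamma R_n^L)(x)-(I-\gamma R_n)(y)\|$ pointwise by $\|x-y\| + \gamma\|R_n^L(x)-R_n(y)\|$, then bound the latter by $\gamma\|R_{\mfellL_n}\nabla\log(\tfrac{\cdot}{\pi_L})(x) - R_{\mfsingle_n}\nabla\log(\tfrac{\cdot}{\pi_L})(y)\| + \gamma\|R_{\mfsingle_n}\nabla\log(\tfrac{\cdot}{\pi_L})(y) - R_{\mfsingle_n}\nabla\log(\tfrac{\cdot}{\pi})(y)\|$, applying \Cref{lem:Lipschitz_drift} (valid for $\pi_L$) to the first piece and the explicit $B\Capp 2^{-\beta L}$ estimate to the second. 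One must also check that $\mfellL_n\in\mathcal P_2(\R^d)$ so that all Wasserstein distances are finite, which follows from \cite[Lemma~12]{NEURIPS20203202111c} applied with target $\pi_L$ under Assumption~\ref{ass:level_accuracy}; the rest is routine bookkeeping of constants, all of which are $L$-independent by design of Assumption~\ref{ass:level_accuracy}.
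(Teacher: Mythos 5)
Your proposal is correct and follows essentially the same route the paper intends: the paper proves this theorem by the remark that it ``follows by similar arguments as Proposition~\ref{prop:err_MC}'', i.e.\ the same split into the Monte Carlo error $\|\xellL_n[\varphi]-\mfellL_n[\varphi]\|_{L_0^2(\R)}\lesssim N_L^{-1/2}$ (Proposition~\ref{prop:err_MC} applied with target $\pi_L$, with $L$-uniform constants by B1--B2) plus the bias $|\mfellL_n[\varphi]-\mfsingle_n[\varphi]|\le\alip\,\mathcal W_2(\mfellL_n,\mfsingle_n)\lesssim 2^{-\beta L}$. Your Gronwall recursion for the bias term is exactly the paper's proof of the second assertion of Proposition~\ref{prop:stability} (via Lemma~\ref{lemma:dlip} and Lemma~\ref{lem:discr_Gr}), so you could simply cite that result; the only cosmetic discrepancy is that the synchronous-coupling step yields the factor $(1+2\gamma\clip)$ rather than $(1+\gamma\clip)$, which does not affect the conclusion.
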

We will quantify both the single-level and the multilevel complexity in Theorem~\ref{thm:complexity_SL} and Theorem~\ref{thm:complexity_ML}.

\subsection{Stability of the mean-field equation}\label{ssec:stability}
We next provide a %
stability result for the MF equation \eqref{eq:MF_level}.
Apart from being required in our convergence analysis,
stability
is important from an inverse problems point of view. %
Inverse problems are typically ill-posed and real-world measurements
are often subject to noise or uncertainties. Therefore, it is crucial
to understand the sensitivity of SVGD 
w.r.t.\ changes in $\log \pi_\ell$.

\begin{proposition}\label{prop:stability}
    Under Assumption~\ref{ass:kernel2} and \ref{ass:level_accuracy} the MF limit \eqref{eq:MF_level} is stable w.r.t.~changes in $\pi_\ell$, in the sense that
    \begin{align*} 
    \mathcal W_2(\mfell_n,\mfellminus_n)&\le \mathbb E_0[\|Z_n^\ell-Z_n^{\ell-1}\|^2]^{\frac12}\\ 
    &\le%
    \Capp 2^{-\beta \ell} (e^{2\dlip T}-1),
    \end{align*}
    and
    \begin{align*} 
    \mathcal W_2(\mfell_n,\mfsingle_n)&\le \mathbb E_0[\|Z_n^\ell-Z_n\|^2]^{\frac12} \\ &\le\frac{\Capp 2^{-\beta \ell}}{2}(e^{2\dlip T}-1),
    \end{align*}
    for all $n\le T/\gamma$, where $\dlip>0$ is defined in Lemma~\ref{lemma:dlip}.
\end{proposition}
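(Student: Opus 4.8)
The plan is to couple the two mean-field trajectories $(Z_n^\ell)$ and $(Z_n^{\ell-1})$ on the common probability space $(\Omega,\mathcal F,\mathbb P_0)$, using that by construction $Z_0^\ell(\omega)=Z_0^{\ell-1}(\omega)$ (both equal $X_0^{\ell,(i)}$ up to relabeling of the coupling). Set $e_n:=\mathbb E_0[\|Z_n^\ell - Z_n^{\ell-1}\|^2]^{1/2}$, so $e_0=0$. Using the two update rules \eqref{eq:MF_level} for levels $\ell$ and $\ell-1$, subtract them and insert $\pm\gamma R_{\mfell_n}(Z_n^{\ell-1})$ to split the increment into (a) a ``Lipschitz in the state and in the measure'' term $\gamma[R_{\mfell_n}(Z_n^\ell)-R_{\mfell_n}(Z_n^{\ell-1})]$, controlled via Lemma~\ref{lem:Lipschitz_drift}, and (b) a ``change of target'' term $\gamma[R_{\mfell_n}(Z_n^{\ell-1}) - R_{\mfellminus_n}(Z_n^{\ell-1})]$, which measures the difference between the drift operators driven by $\pi_\ell$ and $\pi_{\ell-1}$ at the same point and must be bounded using the approximation assumption \eqref{eq:app2}.

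For term (a), Lemma~\ref{lem:Lipschitz_drift} gives $\|R_{\mfell_n}(Z_n^\ell)-R_{\mfell_n}(Z_n^{\ell-1})\|\le \clip(\|Z_n^\ell-Z_n^{\ell-1}\| + \mathcal W_2(\mfell_n,\mfell_n)) = \clip\|Z_n^\ell-Z_n^{\ell-1}\|$, so after taking $L^2_0$-norms this contributes a factor $(1+\gamma\clip)e_n$ (or $\gamma\clip e_n$ as the extra term beyond $e_n$, depending on how one bookkeeps the $1$). For term (b), one computes directly from the integral formula for $R_\rho$ that
\[
R_{\mfell_n}(z) - R_{\mfellminus_n}(z) = -\int_{\R^d} k(x,z)\big(\nabla\log\pi_\ell(x) - \nabla\log\pi_{\ell-1}(x)\big)\,\mfell_n(\dd x) + \big(\text{kernel-gradient terms against } \mfell_n - \mfellminus_n\big).
\]
Here I expect the definition of $\dlip$ in the as-yet-unseen Lemma~\ref{lemma:dlip} to package exactly this: a bound of the form $\|R_{\mfell_n}(z)-R_{\mfellminus_n}(z)\|\le \dlip\,\mathcal W_2(\mfell_n,\mfellminus_n) + (\text{something}) \cdot 2\Capp 2^{-\beta\ell}$, where the first piece comes from the dependence of $P_\rho$ and of $\rho(\dd x)$ on the measure (bounded kernel, A3/A4, $\|\nabla\log\pi_\ell\|\le C_V$ from B2), and the second from \eqref{eq:app2} together with the kernel bound $\|k(\cdot,z)\|\le B$. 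Crucially $\mathcal W_2(\mfell_n,\mfellminus_n)\le e_n$, so term (b) also folds into the recursion, and one arrives at an inequality of the shape $e_{n+1}\le (1+\gamma\dlip)e_n + \gamma\, c\,\Capp 2^{-\beta\ell}$ for a suitable constant (absorbing $B$, $C_V$, etc.) into $\dlip$ or into the right-hand constant.

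With $e_0 = 0$, apply the discrete Gronwall inequality Lemma~\ref{lem:discr_Gr} with $A=\dlip$ and $b = \gamma c\Capp 2^{-\beta\ell}$, giving $e_n \le \frac{c\Capp 2^{-\beta\ell}}{\dlip}(e^{n\gamma\dlip}-1)\le \frac{c\Capp 2^{-\beta\ell}}{\dlip}(e^{\dlip T}-1)$ for $n\le T/\gamma$; absorbing $c/\dlip$ and replacing $e^{\dlip T}-1$ by the stated $e^{2\dlip T}-1$ (a harmless overestimate that presumably matches the precise constants in Lemma~\ref{lemma:dlip}) yields the first bound, and $\mathcal W_2(\mfell_n,\mfellminus_n)\le e_n$ is immediate since the coupling realizes both marginals. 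The second bound, $\mathcal W_2(\mfell_n,\mfsingle_n)\le e_n'\le \tfrac12\Capp 2^{-\beta\ell}(e^{2\dlip T}-1)$, follows by the identical argument comparing level $\ell$ directly with the exact target $\pi$, where now \eqref{eq:app1} gives the factor $\Capp 2^{-\beta\ell}$ (without the extra $2$), explaining the $\tfrac12$. The main obstacle is the clean derivation of the per-step bound on term (b): one must carefully separate the error into a part proportional to $\mathcal W_2(\mfell_n,\mfellminus_n)$ (which closes the recursion) and a part proportional to $2^{-\beta\ell}$ (the genuine forcing), and verify that all the constants coming from A3, A4, B1, B2 are indeed uniform in $\ell$ and $n$; this is presumably precisely the content of Lemma~\ref{lemma:dlip}, so modulo that lemma the proof is a two-line Gronwall argument.
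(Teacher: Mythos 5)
Your proposal is correct and follows essentially the same route as the paper: a synchronous coupling of the two mean-field trajectories with identical initialization, a one-step drift estimate that separates a Lipschitz part (in the state and in the measure, with $\mathcal W_2(\mfell_n,\mfellminus_n)\le e_n$ closing the recursion) from a consistency part of size $O(2^{-\beta\ell})$ --- which is precisely the content of Lemma~\ref{lemma:dlip}, applied in the paper as a black box to the full difference rather than after your manual $\pm\gamma R_{\mfell_n}(Z_n^{\ell-1})$ insertion --- followed by the discrete Gronwall inequality of Lemma~\ref{lem:discr_Gr} with $e_0=0$. Your remarks on the constants (the $e^{2\dlip T}$ arising from the two copies of $e_n$ in the recursion, and the factor $\tfrac12$ tracing back to \eqref{eq:app1} versus \eqref{eq:app2}) match the paper's bookkeeping.
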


\subsection{Variance reduction: Combined stability and mean-field limit}\label{ssec:auxres}
In the following, we present the main advantage of our proposed multilevel MF approach. Through incorporation of the telescoping sum into the estimator \eqref{eq:ML_estimator}, we obtain a variance reduction for the differences 
$\xell_n[\varphi]-\mirrormfell_n[\varphi]$ for increasing accuracy $\ell$. In order to achieve the variance reduction, we combine the MF limit in Proposition~\ref{prop:MF} and the stability result in Proposition~\ref{prop:stability}.

\begin{lemma}\label{lem:MF_levels}
    Under Assumption~\ref{ass:kernel2} and \ref{ass:level_accuracy}, we have for all $n\le T/\gamma$ that
     \[\E_0[\|X_n^{\ell,(1)}-Z_n^{\ell,(1)}\|^2]^{\frac12}\lesssim \frac{1}{\sqrt{N_0}}+\sum_{m=1}^\ell \frac{2^{-\beta m}}{\sqrt{N_m}},\quad \ell = 0,\dots, L.\]
    The constants depend on $T,\ \gamma,\ \clip>0$ and the constants
      in Assumptions~\ref{ass:kernel2} and \ref{ass:level_accuracy}, but are independent of $N_\ell$, $\ell=0,\dots L$ and $L>0$.
\end{lemma}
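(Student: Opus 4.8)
The plan is to prove the bound on $\E_0[\|X_n^{\ell,(1)}-Z_n^{\ell,(1)}\|^2]^{1/2}$ by induction on the level $\ell$, using the telescoping structure of the auxiliary systems together with the two main tools already available: the mean-field limit (Proposition~\ref{prop:MF}) applied at each level, and the stability result in Proposition~\ref{prop:stability}. First I would insert the auxiliary mirrored mean-field process $\widetilde Z_n^{\ell-1,(1)}$ and use the triangle inequality in $L_0^2$ to write
\begin{equation*}
\E_0[\|X_n^{\ell,(1)}-Z_n^{\ell,(1)}\|^2]^{1/2}
\le \E_0[\|X_n^{\ell,(1)}-\widetilde Z_n^{\ell-1,(1)}\|^2]^{1/2}
+ \E_0[\|\widetilde Z_n^{\ell-1,(1)}-Z_n^{\ell,(1)}\|^2]^{1/2}.
\end{equation*}
The second term compares two mirrored mean-field trajectories started from the \emph{same} initial value $X_0^{\ell,(1)}$ but driven by $R_n^{\ell-1}$ and $R_n^\ell$ respectively; expanding the one-step recursion, using that $\|R_n^\ell(z)-R_n^{\ell-1}(z)\|$ is controlled by the approximation gap $\Capp 2^{-\beta\ell}$ (via A4/B3, cf.\ the proof of Proposition~\ref{prop:stability}) plus the $\clip$-Lipschitz bound of Lemma~\ref{lem:Lipschitz_drift} in the $z$-argument and a Wasserstein term $\mathcal W_2(\mfellminus_n,\mfell_n)$ which is itself $O(2^{-\beta\ell})$ by Proposition~\ref{prop:stability}, and then closing with the discrete Gronwall Lemma~\ref{lem:discr_Gr}, yields a contribution of order $2^{-\beta\ell}$ — which is absorbed into the $m=\ell$ term $2^{-\beta\ell}/\sqrt{N_\ell}$ only if $N_\ell\ge1$, so really it contributes an $O(2^{-\beta\ell})\le O(2^{-\beta\ell}/\sqrt{N_\ell})\cdot\sqrt{N_\ell}$; more precisely it contributes $O(2^{-\beta\ell})$ which is dominated by the claimed right-hand side since $2^{-\beta\ell}\le 1$ is already subsumed by the $1/\sqrt{N_0}$ term up to constants. (I would be careful here: the cleanest bookkeeping is to note $2^{-\beta\ell}\lesssim 2^{-\beta\ell}/\sqrt{N_\ell}\cdot\sqrt{N_\ell}$ is \emph{not} helpful, so instead simply bound this piece by $C 2^{-\beta\ell}\le C$, absorbed into the constant multiplying $1/\sqrt{N_0}$ only if $N_0$ is bounded — the honest statement is that it is $\lesssim 2^{-\beta\ell}$, which is $\le$ the stated bound up to a constant because $2^{-\beta\ell}\le 1\le \sqrt{N_\ell}\cdot 2^{-\beta\ell}/\sqrt{N_\ell}$... ) — the key point being it matches one summand on the right.

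For the first term, $\E_0[\|X_n^{\ell,(1)}-\widetilde Z_n^{\ell-1,(1)}\|^2]^{1/2}$, I would further split by inserting $\widetilde X_n^{\ell-1,(1)}$, the auxiliary interacting particle system at level $\ell-1$ of size $N_\ell$:
\begin{equation*}
\E_0[\|X_n^{\ell,(1)}-\widetilde Z_n^{\ell-1,(1)}\|^2]^{1/2}
\le \E_0[\|X_n^{\ell,(1)}-\widetilde X_n^{\ell-1,(1)}\|^2]^{1/2}
+ \E_0[\|\widetilde X_n^{\ell-1,(1)}-\widetilde Z_n^{\ell-1,(1)}\|^2]^{1/2}.
\end{equation*}
The second of these is exactly a mean-field-limit error for a level-$(\ell-1)$ system with $N_\ell$ particles, so Proposition~\ref{prop:MF} (with $\pi$ replaced by $\pi_{\ell-1}$) gives $O(1/\sqrt{N_\ell})$; and since we also have $\E_0[\|\widetilde X_n^{\ell-1,(1)}-Z_n^{\ell-1,(1)}\|\dots]$ — wait, more directly: $\widetilde{\mathcal X}_n^{\ell-1}$ and $\mathcal X_n^{\ell-1}$ are both interacting particle systems driven by $\pi_{\ell-1}$ but with ensemble sizes $N_\ell$ and $N_{\ell-1}$; since the $X_n^{\ell-1,(1)}$ is a representative whose law is the same for any ensemble size in the relevant estimates, and $\E_0[\|\widetilde X_n^{\ell-1,(1)}-Z_n^{\ell-1,(1)}\|^2]^{1/2}$ is bounded using the inductive hypothesis at level $\ell-1$ together with $\E_0[\|\widetilde X_n^{\ell-1,(1)}-\widetilde Z_n^{\ell-1,(1)}\|^2]^{1/2}\lesssim 1/\sqrt{N_\ell}$ from Proposition~\ref{prop:MF}. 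Putting the pieces together: $\E_0[\|X_n^{\ell,(1)}-\widetilde X_n^{\ell-1,(1)}\|^2]^{1/2}$ is handled by a one-step Gronwall argument comparing the \emph{interacting} dynamics at level $\ell$ (drift $\widehat R_n^\ell$, empirical measure $\xell_n$ of size $N_\ell$) with the one at level $\ell-1$ (drift $\widetilde{\widehat R}_n^{\ell-1}$, empirical measure $\xelltilde_n^{\ell-1}$ also of size $N_{\ell+1}$... here the sizes need care) — but crucially both auxiliary systems $\widetilde{\mathcal X}_n^{\ell-1}$ and $\mathcal X_n^\ell$ share the initialization $X_0^{\ell,(i)}$ by \eqref{eq:tildeXellinit}, so no mean-field gap enters here, only the drift discrepancy $\|\widehat R_n^\ell - \widetilde{\widehat R}_n^{\ell-1}\|$ which by A4/B3 and Lemma~\ref{lem:Lipschitz_drift} is bounded by $\Capp 2^{-\beta\ell} + \clip(\|\cdot\|+\mathcal W_2(\xell_n,\xelltilde_n^{\ell-1}))$; iterating and invoking Lemma~\ref{lem:discr_Gr} converts the per-step error $O(2^{-\beta\ell})$ into an $O(2^{-\beta\ell})$ bound (the $e^{n\gamma A}$ factor absorbed into the constant since $n\le T/\gamma$), while the Wasserstein term between the two empirical measures is itself controlled by the quantities $\E_0[\|X_n^{\ell,(1)}-\widetilde X_n^{\ell-1,(1)}\|^2]^{1/2}$ (same-index coupling), letting Gronwall close the loop.

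Collecting all contributions and unwinding the recursion in $\ell$: at each level we pick up a new term $O(2^{-\beta\ell})$ from the drift discrepancy and stability, a term $O(1/\sqrt{N_\ell})$ from the mean-field limit at ensemble size $N_\ell$, and the inductive bound from level $\ell-1$, namely $1/\sqrt{N_0}+\sum_{m=1}^{\ell-1} 2^{-\beta m}/\sqrt{N_m}$. Since $2^{-\beta\ell}\le 1$, the new $O(2^{-\beta\ell})$ is dominated (up to a constant) by the leading $1/\sqrt{N_0}$ term, and $O(1/\sqrt{N_\ell})$ is exactly $2^{-\beta\ell}/\sqrt{N_\ell}$ up to the constant $2^{\beta\ell}$ — \emph{this is the subtle point}: I cannot afford a factor $2^{\beta\ell}$. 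The resolution is that the $O(1/\sqrt{N_\ell})$ from Proposition~\ref{prop:MF} applied at level $\ell-1$ comes \emph{with the weight from the stability/telescoping}: the relevant mean-field error is actually for the \emph{difference} process, whose ``variance'' is $\Var(\mfsingle_0)$-independent scaling times the stability bound $O(2^{-\beta\ell})$, giving $O(2^{-\beta\ell}/\sqrt{N_\ell})$ directly — this is precisely the combined stability-plus-mean-field-limit mechanism flagged in the section title. I expect the \emph{main obstacle} to be exactly this bookkeeping: arranging the telescoping insertions so that the ensemble-size-$N_\ell$ error appears multiplied by the level-$\ell$ stability factor $2^{-\beta\ell}$ rather than standing alone, which requires applying Proposition~\ref{prop:MF} not to a single trajectory but to the coupled difference $X_n^{\ell,(1)}-\widetilde Z_n^{\ell-1,(1)}$ and tracking that its driving fluctuations scale with $2^{-\beta\ell}$; once that is set up correctly, the induction closes cleanly and yields the stated geometric-sum bound.
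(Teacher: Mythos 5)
Your overall strategy (induction over levels, coupling through the auxiliary systems, combining the mean-field limit with stability) matches the paper's, and you correctly identify the crux: the level-$\ell$ Monte Carlo error must come out as $2^{-\beta\ell}/\sqrt{N_\ell}$, not as $1/\sqrt{N_\ell}$ or as a bare $2^{-\beta\ell}$. But the decomposition you actually write down cannot deliver this, and your own hedging parentheticals show the argument does not close. Concretely, inserting $\widetilde Z_n^{\ell-1,(1)}$ and $\widetilde X_n^{\ell-1,(1)}$ by plain triangle inequalities produces three terms, two of which genuinely violate the claimed bound: $\E_0[\|\widetilde Z_n^{\ell-1,(1)}-Z_n^{\ell,(1)}\|^2]^{1/2}$ is of order $2^{-\beta\ell}$ by stability (Proposition~\ref{prop:stability_particles}) with no $1/\sqrt{N}$ factor, and $\E_0[\|\widetilde X_n^{\ell-1,(1)}-\widetilde Z_n^{\ell-1,(1)}\|^2]^{1/2}$ is of order $1/\sqrt{N_\ell}$ by Proposition~\ref{prop:MF} with no $2^{-\beta\ell}$ factor. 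Neither is $\lesssim 1/\sqrt{N_0}+\sum_{m=1}^{\ell}2^{-\beta m}/\sqrt{N_m}$ in the regime that matters, where $N_\ell\ll N_0$; in particular a standalone $2^{-\beta\ell}$ is not dominated by the right-hand side at all when the $N_m$ are large. So the additive splitting is not a bookkeeping nuisance to be fixed later; it is the wrong decomposition.

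The missing idea --- which you name in your final sentence but do not execute --- is to never separate the bias from the fluctuation. The paper's proof introduces the second-order difference $\Delta_n^\ell=\E_0[\|X_n^{\ell,(1)}-\widetilde X_n^{\ell-1,(1)}-(Z_n^{\ell,(1)}-\widetilde Z_n^{\ell-1,(1)})\|^2]^{1/2}$, in which the two $O(2^{-\beta\ell})$ biases cancel, derives a one-step recursion for $\Delta_n^\ell$, and controls the per-step increment by splitting the drift discrepancy into a diagonal term, a term comparing the empirical averages over the coupled pairs $(Z_n^{\ell,(m)},\widetilde Z_n^{\ell-1,(m)})$, and a centered i.i.d.\ sum to which the Marcinkiewicz--Zygmund inequality is applied; because the summands in that last piece are differences whose second moment is $O(2^{-2\beta\ell})$ by stability, the fluctuation comes out as $2^{-\beta\ell}/\sqrt{N_\ell}$. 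The recursion is then closed via the nested inequality \eqref{eq:nested} and a double induction (over $\ell$, and over $n$ through the discrete Gronwall lemma). Without this step your argument yields at best a bound of order $2^{-\beta\ell}+1/\sqrt{N_\ell}$ at level $\ell$, which is strictly weaker than the lemma and would destroy the complexity gain in Theorem~\ref{thm:complexity_ML}.
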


\section{Complexity analysis}\label{sec:complexity}
Having derived the improved error bound for the proposed multilevel MF approximation \eqref{eq:ML_estimator}, we now want to compare both the multilevel and single-level MF approximation. We refer to $\xellL_n[\varphi]$ as single-level MF approximation, which applies the particle approximation of SVGD to one fixed accuracy level $L$ and a fixed number of particles $N_L$. 
\begin{theorem}[Single-level complexity]\label{thm:complexity_SL}
    Suppose that Assumptions~\ref{ass:kernel2}, \ref{ass:level_accuracy} and \ref{ass:cost} are satisfied. Moreover, given a tolerance $\varepsilon>0$ let $L= \ceil{\frac{1}{\beta\log(2)}\log(\frac2\varepsilon)}$ and $N_L \propto \varepsilon^{-2}$. Then the expected error of the SL estimator is bounded by
    \[\|\xellL_n[\varphi]- \mfsingle_n[\varphi]\|_{L_0^2(\R)} = \E_0[|\xellL_n[\varphi]- \mfsingle_n[\varphi]|^2]^{\frac12}\lesssim \varepsilon\]
    with a cost that is bounded by 
    \[\cost_{\SL} = n \cdot N_L \cdot 2^{qL} \lesssim \varepsilon^{-2-\frac{q}{\beta}}.\]
  \end{theorem}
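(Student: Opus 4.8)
The plan is to directly combine the single-level error bound (the second Theorem in \Cref{ssec:main}) with the cost formula from \Cref{ass:cost}, substituting the prescribed choices of $L$ and $N_L$ and tracking how each term scales in $\varepsilon$. There are really two things to verify: that the error is $\lesssim\varepsilon$ and that the cost is $\lesssim \varepsilon^{-2-q/\beta}$.

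First I would bound the error. By the single-level theorem we have $\|\xellL_n[\varphi]-\mfsingle_n[\varphi]\|_{L_0^2(\R)}\lesssim N_L^{-1/2} + 2^{-\beta L}$. With $N_L\propto\varepsilon^{-2}$ the first term is $\lesssim\varepsilon$. For the second term, the choice $L=\ceil{\frac{1}{\beta\log 2}\log(2/\varepsilon)}$ ensures $L\ge \frac{1}{\beta\log 2}\log(2/\varepsilon)$, hence $\beta L\log 2 \ge \log(2/\varepsilon)$, i.e.\ $2^{\beta L}\ge 2/\varepsilon$, so $2^{-\beta L}\le \varepsilon/2 \le \varepsilon$. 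Adding the two contributions gives the claimed $\lesssim\varepsilon$ bound.

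Next I would bound the cost. By \Cref{ass:cost}, $\cost_{\SL}=n\cdot N_L\cdot 2^{qL}$. Since $n\le T/\gamma$ is fixed, it contributes only a constant. The factor $N_L\propto\varepsilon^{-2}$ is immediate. For $2^{qL}$, the ceiling gives $L\le \frac{1}{\beta\log 2}\log(2/\varepsilon) + 1$, so $2^{qL}\le 2^{q}\cdot 2^{\frac{q}{\beta\log 2}\log(2/\varepsilon)} = 2^q (2/\varepsilon)^{q/\beta}\lesssim \varepsilon^{-q/\beta}$, where the hidden constant absorbs $2^{q}$ and $2^{q/\beta}$ and depends only on $q$ and $\beta$. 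Multiplying, $\cost_{\SL}\lesssim \varepsilon^{-2}\cdot\varepsilon^{-q/\beta} = \varepsilon^{-2-q/\beta}$, as claimed.

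Honestly there is no serious obstacle here; the only thing requiring a little care is the two-sided control of $L$ coming from the ceiling function (lower bound used for the error, upper bound used for the cost) and the bookkeeping of constants, making sure the $\lesssim$ hides only quantities depending on $\varphi$, $T$, $q$, $\beta$ and the constants in Assumptions~\ref{ass:kernel2} and \ref{ass:level_accuracy}, but not on $\varepsilon$. One should also note $\varepsilon$ is implicitly assumed small enough (say $\varepsilon\le 2$) so that $L\ge 1$ and the single-level theorem applies; this can be stated in passing.
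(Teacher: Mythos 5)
Your proposal is correct and follows essentially the same route as the paper: invoke the single-level error bound, substitute the prescribed $L$ and $N_L$ to get $2^{-\beta L}\le\varepsilon/2$ and $N_L^{-1/2}\lesssim\varepsilon$, then plug into the cost formula of Assumption~\ref{ass:cost}. Your explicit handling of the two-sided effect of the ceiling (lower bound on $L$ for the error, upper bound for the cost) is slightly more careful than the paper's one-line computation, but the argument is identical in substance.
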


Depending on the relation between computational cost parameter $q\ge0$ and approximation parameter $\beta>0$, we are able to verify improved rates of convergence for the proposed multilevel MF estimator \eqref{eq:ML_estimator}.
\begin{theorem}[Multilevel complexity]\label{thm:complexity_ML}
    Suppose that Assumptions~\ref{ass:kernel2}, \ref{ass:level_accuracy} and \ref{ass:cost} are satisfied. Moreover, given a tolerance $\varepsilon>0$ let $L= \ceil{\frac{1}{\beta\log(2)}\log(\frac2\varepsilon)}$ and $N_\ell \propto (L+1)^4 2^{-2\beta\ell}\varepsilon^{-2}$. Then the expected error of the ML estimator is bounded by
    \[\|\widehat\mfsingle_n^{\ML}[\varphi]- \mfsingle_n[\varphi]\|_{L_0^2(\R)} = \E_0[|\widehat\mfsingle_n^{\ML}[\varphi]- \mfsingle_n[\varphi]|^2]^{\frac12}\lesssim \varepsilon\]
    with a cost that is bounded by
    \begin{align*} 
    \cost_{\ML} &= n\cdot \left(\sum_{\ell = 0}^L N_\ell \cdot 2^{q\ell} + \sum_{\ell = 1}^L N_\ell \cdot 2^{q(\ell-1)} \right)
    \\ &\lesssim \begin{cases}
        |\log(\varepsilon)|^4\varepsilon^{-\frac{q}{\beta}}, & q>2\beta,\\
        |\log(\varepsilon)|^5\varepsilon^{-2}, &q=2\beta,\\
        |\log(\varepsilon)|^4\varepsilon^{-2}, &q<2\beta.
    \end{cases} 
    \end{align*}
\end{theorem}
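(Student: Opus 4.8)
\textbf{Plan for the proof of Theorem~\ref{thm:complexity_ML}.}
The plan is to insert the proposed choices of $L$ and $N_\ell$ into the error bound of Theorem~\ref{thm:err_ML} to verify the accuracy claim, and then into the cost expression $\cost_{\ML}$ to obtain the complexity bounds by summing a geometric-type series whose ratio is governed by the sign of $q-2\beta$. First I would verify the error estimate. With $L= \ceil{\frac{1}{\beta\log 2}\log(2/\eps)}$ we have $2^{-\beta L}\le \eps/2\lesssim\eps$, so the second term in Theorem~\ref{thm:err_ML} is controlled. For the first term, substituting $N_\ell \propto (L+1)^4 2^{-2\beta\ell}\eps^{-2}$ gives $N_0^{-1/2}\propto (L+1)^{-2}\eps$ and $2^{-\beta\ell}N_\ell^{-1/2}\propto (L+1)^{-2}\eps 2^{-\beta\ell}2^{\beta\ell}=(L+1)^{-2}\eps$ for each $\ell\ge1$; hence the sum $\frac{1}{\sqrt{N_0}}+\sum_{\ell=1}^L \frac{2^{-\beta\ell}}{\sqrt{N_\ell}}\lesssim (L+1)\cdot (L+1)^{-2}\eps=(L+1)^{-1}\eps$, and multiplying by the prefactor $(L+1)$ yields $\lesssim\eps$. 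Combined with the $2^{-\beta L}\lesssim\eps$ term this gives the stated accuracy $\lesssim\eps$.

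Next I would bound the cost. Using $\cost_{\ML}\le n\cdot 2^{1-q}\sum_{\ell=0}^L N_\ell 2^{q\ell}$ from Section~\ref{sec:MLSVGD} and substituting $N_\ell\propto (L+1)^4 2^{-2\beta\ell}\eps^{-2}$, we get
\[
\cost_{\ML}\lesssim (L+1)^4\eps^{-2}\sum_{\ell=0}^L 2^{(q-2\beta)\ell}.
\]
The geometric sum $\sum_{\ell=0}^L 2^{(q-2\beta)\ell}$ splits into three cases: if $q>2\beta$ it is $\lesssim 2^{(q-2\beta)L}$; if $q=2\beta$ it equals $L+1$; and if $q<2\beta$ it is $\lesssim 1$ (a convergent series). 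Using $L\sim \frac{1}{\beta\log 2}\log(2/\eps)$, so that $L+1\lesssim |\log\eps|$ and $2^{(q-2\beta)L}\lesssim \eps^{-(q-2\beta)/\beta}=\eps^{2-q/\beta}$, the three cases become $|\log\eps|^4 \eps^{-2}\eps^{2-q/\beta}=|\log\eps|^4\eps^{-q/\beta}$, $|\log\eps|^5\eps^{-2}$, and $|\log\eps|^4\eps^{-2}$ respectively, matching the claimed bounds. (The absolute value of $n$ is absorbed since $n\le T/\gamma$ is fixed.)

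This argument is essentially a bookkeeping exercise once Theorem~\ref{thm:err_ML} and the cost formula are in hand, so I do not expect a genuine obstacle; the only point requiring care is the interaction between the polylogarithmic factors $(L+1)$ appearing both in the error bound (forcing the extra $(L+1)^4$ in $N_\ell$ to preserve the $\eps$ accuracy) and in the cost, and checking that the exponents on $|\log\eps|$ come out as stated in each of the three regimes. I would also double-check the boundary case $q=2\beta$ where the extra $L+1$ from the geometric sum raises the log power from $4$ to $5$, and note that the constants are independent of $n$ in the stated range $n\le T/\gamma$.
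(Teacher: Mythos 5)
Your proposal is correct and follows essentially the same route as the paper: plug the choices of $L$ and $N_\ell$ into the error bound of Theorem~\ref{thm:err_ML} to get accuracy $\lesssim\varepsilon$, then evaluate the geometric sum $\sum_{\ell=0}^L 2^{(q-2\beta)\ell}$ in the three regimes determined by the sign of $q-2\beta$ to obtain the stated cost bounds. The bookkeeping of the $(L+1)$ powers, including the extra factor of $L+1$ in the boundary case $q=2\beta$, matches the paper's computation.
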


\section{Numerical results}
Let $D=(0,1)$ and consider the inverse problem of recovering $f\in L^2(D)$ given discrete observation points of the solution $u_f\in H^2(D) \cap H_0^1(D) \subset L^2(D)$ of the equation 
\begin{equation}\label{eq:IP_example}
    \begin{split}
      -u_f''(s)
        + u_f(s) &= f(s),\quad s\in D,\\
        u_f(s) &=0,\quad s\in\partial D\,.
    \end{split}
\end{equation}
We define the solution-to-observation operator as bounded linear mapping $\mathcal O:%
H_0^1(D)
\to \R^{n_y}$ such that the forward model is given by $f\mapsto F(f):=\cO(u_f)\in\R^{n_y}$. In our specific example we define $\cO(u_f):= (u_f(s_i))_{i=1}^{n_y}$ with $s_i=\frac{i}{n_y+1}$ and $n_y=15$. Since \eqref{eq:IP_example} has no closed form solution, $u_f$ needs to be approximated numerically using e.g.~the finite element method (FEM). More precisely, given an accuracy level $\ell\ge1$ we consider piecewise linear FEM on a uniform mesh over $D$ of size $2^\ell$ to obtain the approximation $u_f^\ell$ satisfying $\|u_f-u_f^\ell\|_{H_0^1(D)}\lesssim 2^{-\ell}$ and $\|u_f^{\ell}-u_f^{\ell-1}\|_{L^2(D)}\lesssim 2^{-\ell}$.

We introduce a parametrization of $f\in L^2(D)$ through the (truncated) spectral representation 
\[ f(x,\cdot) = \sum_{i=1}^d x_i \frac{\sqrt{2}}{\pi} \sin(i\pi \cdot)\in L^2(D)\]
for $x = (x_1,\dots,x_d)^\top\in\R^d$. Our prior on the parameters $x$ is given as Gaussian $\cN(0,C_0)$ with $C_0 = \diag(i^{-2},i=1,\dots,d)$. Using the FEM approximation $u_{f(x,\cdot)}^\ell$ we can construct an approximation $\log \pi_\ell(x)$ of $\log\pi(x)$ satisfying 
\[\| \nabla \log\pi_\ell(x)-\nabla\log\pi(x)\| \lesssim 2^{-\ell},\quad \ell\in\N\,. \]
For further details on the approximation error we refer to \cite[Section~6]{MLOPTI}, where a similar example was discussed.

For the implementation of the discrete-time SVGD in the different
considered variants \eqref{eq:SVGD_level} and \eqref{eq:SVGD_level2},
we fix a Gaussian kernel
$k(x_1,x_2) \propto \exp(-\frac{1}{2}\|C_0^{-1/2}(x_1-x_2)\|^2)$ and
set a step size $\gamma = 10^{-1}$. To generate a reference solution
of the MF equation, we have applied \eqref{eq:SVGD_level} with
accuracy level $\ell = L_{\mathrm{ref}} = 13$ and ensemble size
$N_{\mathrm{ref}} = 3000$, which we denote by
$\rho_n^{\mathrm{ref}}[\varphi]$. Moreover, we have followed the
choices on $L$, $N_L$ and $L$, $N_\ell$, $\ell = \ell_0,\dots, L$
proposed in Theorem~\ref{thm:complexity_SL} and
Theorem~\ref{thm:complexity_ML} to construct our single-level
estimator $\xellL_n[\varphi]$ and multilevel estimator
$\widehat\mfsingle_n^{\ML}[\varphi]$. As quantity of interest, we have
considered $\varphi(x) = \|f(x,\cdot)\|_{L^2(D)}$ which for each given
$x\in\R^d$ is approximated on the finest grid $L_{\mathrm{ref}}$ using
a trapezoid quadrature rule. We have applied $100$ runs to construct
Monte Carlo estimates of the error
$\E[\|\widehat\mfsingle_n^{\ML}[\varphi] -
\rho_n^{\mathrm{ref}}[\varphi]\|^2]^{\frac12}$ and
$\E[\|\xellL_n[\varphi] -
\rho_n^{\mathrm{ref}}[\varphi]\|^2]^{\frac12}$ which are shown in
Figure~\ref{fig:errorplot_SVGD} for different choices of $n$.

\begin{figure*}[htb!]
\centering \includegraphics[width=0.33\textwidth]{./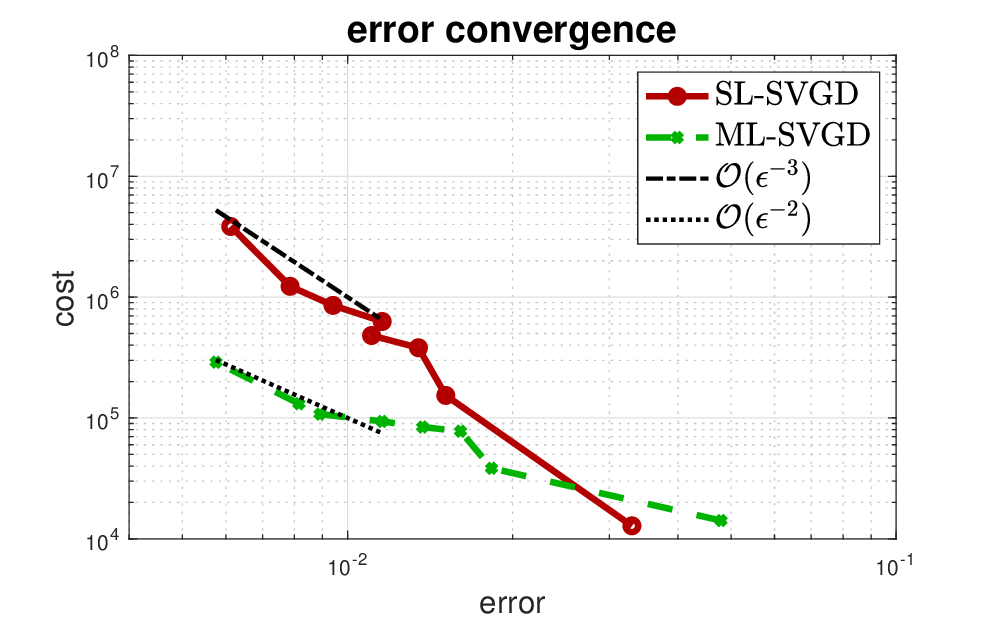}~~\includegraphics[width=0.33\textwidth]{./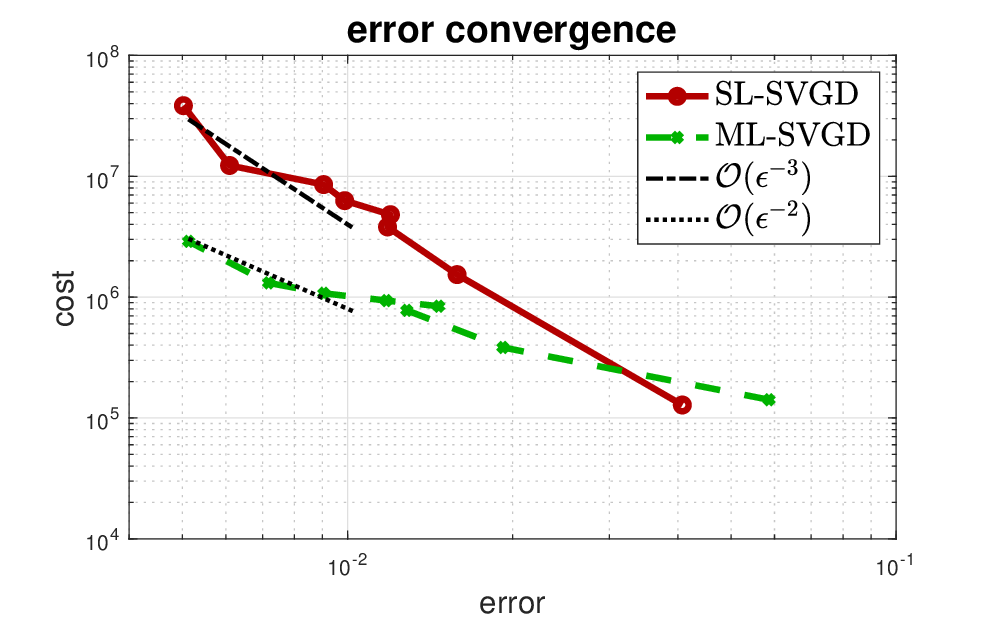}~~\includegraphics[width=0.33\textwidth]{./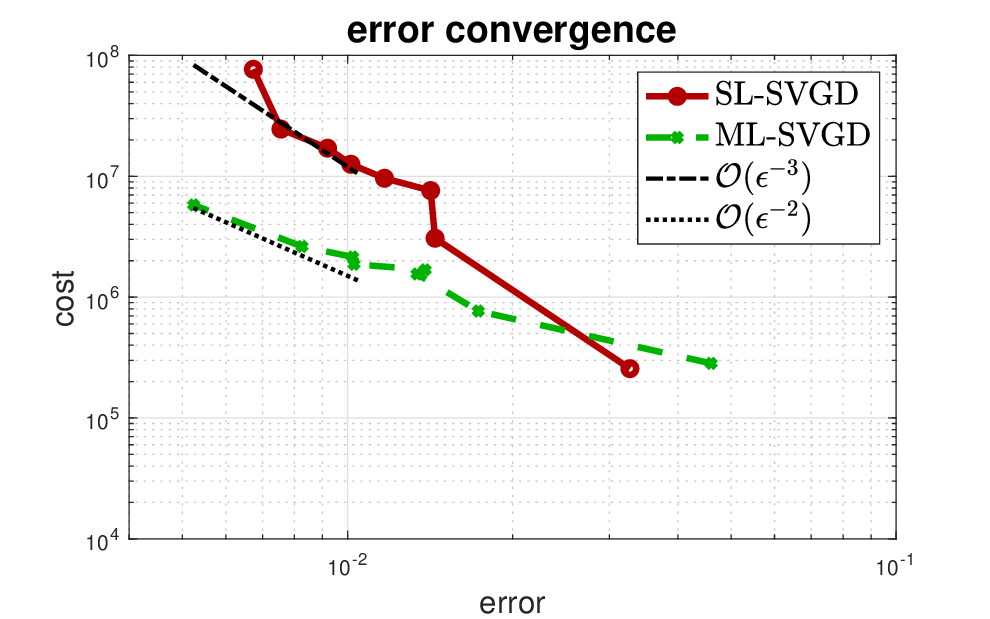}
 \caption{Error convergence for $10$ iterations (left), $100$ iterations (middle) and $200$ iterations (right).} \label{fig:errorplot_SVGD}
\end{figure*} 

\section{Conclusions}
In this paper, we introduced a novel SVGD multilevel method and
  provided a convergence analysis.  As an application we focus on
  Bayesian inverse problems, for which the likelihood is expensive to
  evaluate, but numerical approximations at different accuracy levels
  and computational complexity are available. Our main results give
  error bounds in terms of the total computational complexity of all
  required likelihood evaluations; in particular we show an
  improvement over a ``naive'' single-level implementation of SVGD.

  Our method, which is based on a meticulous combination of several
  particle systems operating at different accuracy levels,
  fundamentally differs from previous multilevel SVGD and interacting
  particle approaches, such as those in \cite{alsup22a,MLOPTI}. In
  these works, the authors propose incrementally increasing the
  accuracy of likelihood evaluations over time steps. Combining this
  idea with our approach presents an interesting opportunity for
  future research.

\bibliographystyle{abbrv}
\bibliography{references.bib}

\newpage
\appendix
\section{Illustration of the multilevel mean-field particle approximation}
\begin{figure*}[htb!]
\centering \includegraphics[width=0.85\textwidth]{./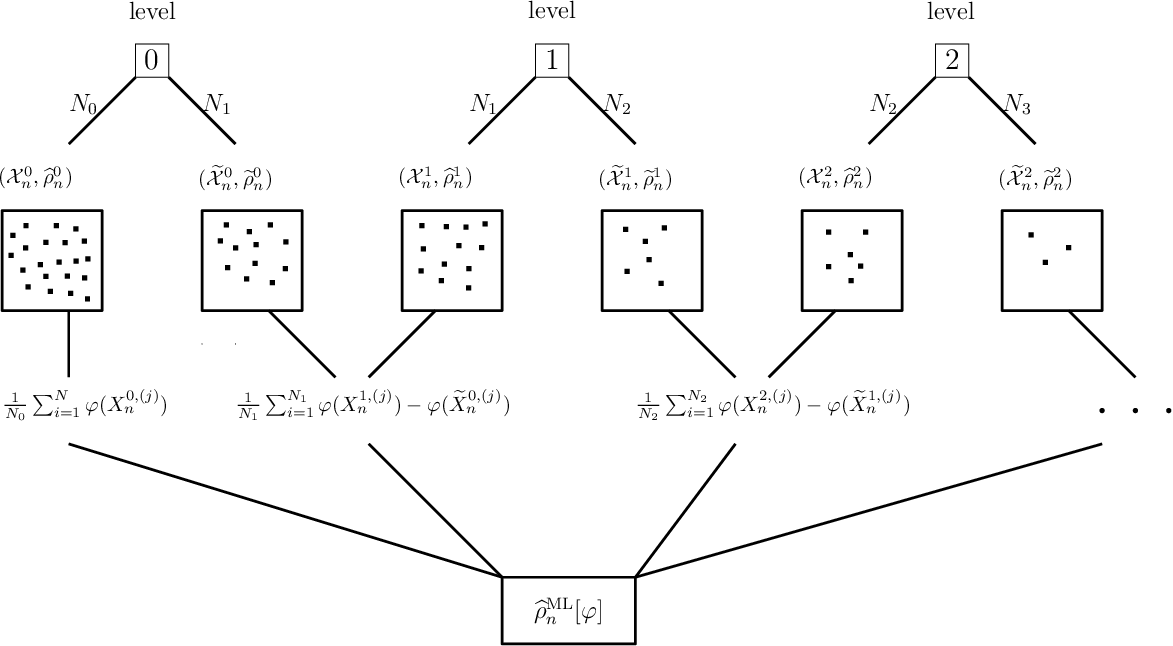}
 \caption{Illustration of the multilevel mean-field particle approximation.} \label{fig:ML_sketch}
\end{figure*} 

\section{Proofs of \Cref{sec:MLSVGD} }
\begin{proof}[Proof of \Cref{prop:err_MC}]
    We can quantify the approximation error through
    \[\E_0[|\xsingle_n[\varphi] - \mfsingle_n[\varphi]|^2]^{\frac12} \le \E_0[|\xsingle_n[\varphi]- \mirrormfsingle_n[\varphi]|^2]^{\frac12} + \E_0[|\mirrormfsingle_n[\varphi] - \mfsingle_n[\varphi]|^2]^{\frac12} =: A_1 + A_2.\]
    We start with
    \begin{align*} 
    A_1 = \E_0[|\frac{1}N\sum_{i=1}^N \left(\varphi(X_n^{(i)})-\varphi(Z_n^{(i)})\right)|^2]^{\frac12} &\le \frac{\alip}N\sum_{i=1}^N \E_0[\|X_n^{(i)} - Z_n^{(i)}\|^2]^{\frac12} \\ &\le \alip \left(\frac1N\sum_{i=1}^N \E_0[\|X_n^{(i)} - Z_n^{(i)}\|^2]\right)^{1/2}=\alip c_n,
    \end{align*}
    where we have used Jensen's inequality. Note that $c_n\le\frac12\left(\frac{1}{\sqrt{N}} \sqrt{\Var(\mfsingle_0)} e^{A T}\right)(e^{2 A T}-1)$ by Proposition~\ref{prop:MF}. %
    Moreover, we have
    \begin{align*}
        A_2 = \E_0[|\frac{1}N\sum_{i=1}^N \left(\varphi(Z_n^{(i)})-\mathbb E_0[\varphi(Z_n)]\right)|^2]^{\frac12} \le B\frac{\E_0[|\varphi(Z_n)-\E_0[|\varphi(Z_n)|]|^2]}{\sqrt{N}},
    \end{align*}
    where we have used the Marcinkiewicz-Zygmund
    inequality, see \cite[Theorem 5.2]{KM2015} and the fact that the $Z_n^{(i)}$ are i.i.d.~%
    with the same distribution as $Z_n$.
    Note that due to Lipschitz continuity of $\varphi$ we have \[\E_0[|\varphi(Z_n)-\E_0[\varphi(Z_n)]|^2]\le\int_{\R^d} |\varphi(z)|^2\,\mfsingle_n({\mathrm d}z)\le \varphi^2(0) + 2\alip\int_{\R^d} |z|^2\,\mfsingle_n({\mathrm d}z) \le 2\varphi^2(0) + \alip \sqrt{\Var(\mfsingle_n)}\] and %
    according to
    \cite[Lemma~12]{NEURIPS20203202111c} under Assumption~\ref{ass:kernel2} the variance $\Var(\mfsingle_n)$ remains bounded for $n\le T/\gamma$ due to \eqref{eq:initial_moment}. Finally, we obtain
    $A_1+A_2 \le C/\sqrt{N}$ for some constant $C>0$ independent of $N$. 
\end{proof}

\section{Proofs of \Cref{sec:convergence}}
\subsection{Proofs of \Cref{ssec:main}}
\begin{proof}[Proof of \Cref{thm:err_ML}]
    The principle of the derived error bounds follows the multilevel ensemble Kalman filtering formulation presented in \cite{doi:10.1137/15M100955X}.
    We define the estimators
    \[\mirrormfell_n[\varphi] = \frac1{N_\ell} \sum_{i=1}^{N_\ell}\varphi(Z_n^{\ell,(i)}) \quad \text{and}\quad \mirrormfell_n[\varphi]-\mirrormfelltildeminus_n[\varphi] = \frac1{N_\ell} \sum_{i=1}^{N_\ell}\left(\varphi(Z_n^{\ell,(i)})-\varphi(\widetilde Z_n^{\ell-1,(i)})\right),\]
    and the corresponding multilevel estimator
    \[\mirrormfsingle_n^{\ML}[\varphi] = \mirrormfellzero_n + \sum_{\ell=1}^L \left(\mirrormfell_n[\varphi]-\mirrormfelltildeminus_n[\varphi]\right).\]
    We can split the error into
    \begin{equation}\label{eq:splitted_MLerr}
    \E_0[|\widehat\mfsingle_n^{\ML}[\varphi]- \mfsingle_n[\varphi]|^2]^{\frac12}\le \E_0[|\widehat\mfsingle_n^{\ML}[\varphi]- \mirrormfsingle_n^{\ML}[\varphi]|^2]^{\frac12} + \E_0[|\mirrormfsingle_n^{\ML}[\varphi]- \mfellL_n[\varphi]|^2]^{\frac12} + |\mfellL_n[\varphi]-\mfsingle_n[\varphi]|.
    \end{equation}
    We start with the first term on the rhs in \eqref{eq:splitted_MLerr}.
    Using that $\tilde X_{0}^{\ell,(i)}= X_0^{\ell+1,(i)}=\tilde Z_0^{\ell,(i)}$ 
      for $i=1,\dots,N_{\ell+1}$ (cp.~\eqref{eq:tildeXellinit} and \eqref{eq:tildeZellinit}),
    \begin{align*}
    &\E_0[|\widehat\mfsingle_n^{\ML}[\varphi]- \mirrormfsingle_n^{\ML}[\varphi]|^2]^{\frac12}\\ &\le \frac{1}{N_0}\sum_{i=1}^{N_0}\E_0[|\varphi(X_n^{0,(i)})-\varphi(Z_n^{0,(i)})|^2]^{\frac12} \\ &\quad + \sum_{\ell=1}^L \frac{1}{N_\ell}\sum_{i=1}^{N_\ell} \E_0[|\varphi(X_n^{\ell,(i)})-\varphi(\tilde X_n^{\ell-1,(i)})-(\varphi(Z_n^{0,(i)})-\varphi(\tilde Z_n^{\ell-1,(i)}))|^2]^{\frac12}\\
    &= \E_0[|\varphi(X_n^{0,(1)})-\varphi(Z_n^{0,(1)})|^2]^{\frac12} + \sum_{\ell=1}^L \E_0[|\varphi(X_n^{\ell,(1)})-\varphi(\tilde X_n^{\ell-1,(1)})-(\varphi(Z_n^{0,(1)})-\varphi(\tilde Z_n^{\ell-1,(1)}))|^2]^{\frac12}\\
    &\le \alip \E_0[\|X_n^{0,(1)}-Z_n^{0,(1)}\|^2]^{\frac12} + \alip\sum_{\ell=1}^L \left(\E_0[\|X_n^{\ell,(1)}-Z_n^{\ell,(1)}\|^2]^{\frac12}+\E_0[\|\widetilde X_n^{\ell-1,(1)}-\widetilde Z_n^{\ell-1,(1)}\|^2]^{\frac12}\right).
    \end{align*}  
    Here we used that $\{X_n^{\ell,(j)}, j=1,\dots,N_\ell\}$ and
    $\{Z_n^{\ell,(j)}, j=1,\dots,N_\ell\}$ are %
    collections of
    identically distributed (but not independent) random variables.
    Applying Lemma \ref{lem:MF_levels} this yields
    \begin{equation*}
      \E_0[|\widehat\mfsingle_n^{\ML}[\varphi]- \mirrormfsingle_n^{\ML}[\varphi]|^2]^{\frac12}
      \lesssim \sum_{\ell=0}^L \Big(\frac{1}{\sqrt{N_0}}+\sum_{m=1}^\ell \frac{2^{-\beta m}}{\sqrt{N_m}}\Big)\le
      (L+1) \left(\frac{1}{\sqrt{N_0}}+ \sum_{\ell=1}^L \frac{2^{-\beta \ell}}{\sqrt{N_\ell}} \right).
    \end{equation*}
    Next, for the second term of the rhs in \eqref{eq:splitted_MLerr} we again apply
    the Marcinkiewicz-Zygmund inequality \cite[Theorem~5.2]{KM2015},
    \begin{align*}
    &\E_0[|\mirrormfsingle_n^{\ML}[\varphi]-\mfellL_n[\varphi]|^2]^{\frac12}= \E_0\Big[\Big|\mirrormfellzero_n[\varphi]-\mfellzero_n[\varphi] + \sum_{\ell=1}^L\left(\mirrormfell_n[\varphi] - \mirrormfelltildeminus_n[\varphi] - (\mfell_n[\varphi]-\mfellminus_n[\varphi])\right)\Big|^2\Big]^{\frac12}\\
    &\le \E_0\Big[\Big|\frac{1}{N_0}\sum_{i=1}^{N_0} \varphi(Z_n^{0,(i)})-\E_0[\varphi(Z_n^{0})]\Big|^2\Big]^{\frac12}\\ &\quad + \sum_{\ell=1}^L \E_0\Big[\Big|\frac{1}{N_\ell}\sum_{i=1}^{N_\ell}\left(\varphi(Z_n^{\ell,(i)})-\varphi(\widetilde Z_n^{\ell-1,(i)}) - \E_0[\varphi(Z_n^{\ell})-\varphi( Z_n^{\ell-1})]\right)\Big|^2\Big]^{\frac12}\\
    &\le \frac1{\sqrt{N_0}}\E_0[|\varphi(Z_n^{0})|^2]^{\frac12} + \sum_{\ell=1}^L \frac{1}{\sqrt{N_\ell}} \E_0[|\varphi(Z_n^{\ell})-\varphi(Z_n^{\ell-1})|^2]^{\frac12}\\
    &\le \frac{\varphi(0)+\alip}{\sqrt{N_0}}\E_0[\|Z_n^{0}\|^2]^{\frac12}+ \sum_{\ell=1}^L \frac{\alip}{\sqrt{N_\ell}} \E_0[\|Z_n^{\ell}-Z_n^{\ell-1}\|^2]^{\frac12},\\
      &\le \frac{\varphi(0)+\alip}{\sqrt{N_0}}\E_0[\|Z_n^{0}\|^2]^{\frac12} + \sum_{\ell=1}^L \frac{\alip}{\sqrt{N_\ell}} \frac{\Capp 2^{-\beta \ell}}{2}(e^{2\gamma\dlip T}-1)\\
      &\lesssim \frac{1}{\sqrt{N_0}}+\sum_{\ell=1}^L \frac{2^{-\beta\ell}}{\sqrt{N_\ell}}
    \end{align*}
    where for the second inequality we have used
    Remark~\ref{rem:mirror_particles}, for the second to last
    inequality we used Proposition~\ref{prop:stability}, and $\dlip>0$
    is as in Lemma \ref{lemma:dlip}.
    
    Finally, for the third term on the rhs in
    \eqref{eq:splitted_MLerr} again with
    Proposition~\ref{prop:stability} we obtain
    \[|\mfellL_n[\varphi]-\mfsingle_n[\varphi]| = |\E_0[\varphi(Z_n^L)-\varphi(Z_n)]| \le \alip \E_0[\|Z_n^L-Z_n\|^2]^{\frac12} %
      \lesssim
      2^{-\beta L}.\]
\end{proof}

\subsection{Proofs of \Cref{ssec:stability}}
We first show the following stability property w.r.t.~$\log\pi_\ell$, 
which is similar to Lemma~\ref{lem:Lipschitz_drift}.
\begin{lemma}\label{lemma:dlip}
  Under Assumption~\ref{ass:kernel2} and \ref{ass:level_accuracy}
  there exists $\dlip>0$ depending on the constants in these
  assumptions such that
    \begin{align*}
        &\|P_{\rho_1}\nabla\log\left(\frac{\rho_1}{\pi_\ell}\right)(z_1) - P_{\rho_2}\nabla\log\left(\frac{\rho_2}{\pi_{\ell-1}}\right)(z_2)\|\\ &\le \dlip [\|z_1-z_2\|+ \mathcal W_2(\rho_1,\rho_2)+ 2^{-\beta \ell}].
    \end{align*} 
    and similarly
    \begin{align*} 
    &\|P_{\rho_1}\nabla\log\left(\frac{\rho_1}{\pi_\ell}\right)(z_1) - P_{\rho_2}\nabla\log\left(\frac{\rho_2}{\pi}\right)(z_2)\| \\ &\le \dlip [\|z_1-z_2\|+ \mathcal W_2(\rho_1,\rho_2)+ 2^{-\beta \ell}].
    \end{align*}
  \end{lemma}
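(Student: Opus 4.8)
The plan is to reduce the claimed bound to Lemma~\ref{lem:Lipschitz_drift} plus the level-accuracy estimate \eqref{eq:app1}--\eqref{eq:app2}, by splitting each inequality into a term measuring the effect of changing the measure/base-point (already controlled by $\clip$) and a term measuring the effect of replacing one potential by another. Concretely, for the first inequality I would write
\begin{align*}
&\Big\|P_{\rho_1}\nabla\log\Big(\frac{\rho_1}{\pi_\ell}\Big)(z_1) - P_{\rho_2}\nabla\log\Big(\frac{\rho_2}{\pi_{\ell-1}}\Big)(z_2)\Big\|\\
&\le \Big\|P_{\rho_1}\nabla\log\Big(\frac{\rho_1}{\pi_\ell}\Big)(z_1) - P_{\rho_2}\nabla\log\Big(\frac{\rho_2}{\pi_\ell}\Big)(z_2)\Big\| + \Big\|P_{\rho_2}\nabla\log\Big(\frac{\rho_2}{\pi_\ell}\Big)(z_2) - P_{\rho_2}\nabla\log\Big(\frac{\rho_2}{\pi_{\ell-1}}\Big)(z_2)\Big\|.
\end{align*}
The first summand is bounded by $\clip[\|z_1-z_2\|+\mathcal W_2(\rho_1,\rho_2)]$ by applying Lemma~\ref{lem:Lipschitz_drift} with $\pi$ replaced by $\pi_\ell$; this is legitimate since $\pi_\ell$ satisfies B1--B2, which are exactly the hypotheses A1--A2 used in that lemma (and A3--A4 concern only the kernel, which is unchanged).

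For the second summand I would use the explicit integral representation
\[
P_{\rho_2}\nabla\log\Big(\frac{\rho_2}{\pi_\ell}\Big)(z_2) - P_{\rho_2}\nabla\log\Big(\frac{\rho_2}{\pi_{\ell-1}}\Big)(z_2) = -\int_{\R^d} k(x,z_2)\big(\nabla\log\pi_\ell(x) - \nabla\log\pi_{\ell-1}(x)\big)\,\rho_2(\dd x),
\]
which follows because the $\nabla_1 k$ terms cancel. Taking norms and using the boundedness of the kernel (A3, which gives $|k(x,z_2)|\le B'$ for a constant $B'$ coming from $\|k(x,\cdot)\|_{\mathcal H}\le B$ together with the reproducing property, or directly from stationarity and continuity), together with \eqref{eq:app2}, bounds this by $B' \cdot 2\Capp 2^{-\beta\ell}$, i.e.\ a constant times $2^{-\beta\ell}$. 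Collecting terms and defining $\dlip$ as the maximum of $\clip$ and these kernel/approximation constants yields the first inequality. The second inequality is identical, using \eqref{eq:app1} in place of \eqref{eq:app2} (losing only a factor $2$, harmlessly absorbed into $\dlip$).

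I do not expect a genuine obstacle here; the only point requiring a little care is making sure the pointwise bound $|k(x,z)|\le B'$ is actually available from Assumption~\ref{ass:kernel2}. Since $k$ is stationary and positive definite, $k(x,z) = \langle k(x,\cdot), k(z,\cdot)\rangle_{\mathcal H_0}$ by the reproducing property, so $|k(x,z)|\le \|k(x,\cdot)\|_{\mathcal H_0}\|k(z,\cdot)\|_{\mathcal H_0}\le B^2$ by A3; alternatively one notes $k(x,z)$ depends only on $x-z$ and $k(x,x)=\|k(x,\cdot)\|_{\mathcal H_0}^2\le B^2$ is constant, so $|k(x,z)|\le k(x,x)\le B^2$ by Cauchy--Schwarz in $\mathcal H_0$. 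With this in hand every step is a one-line estimate, so the proof is short.
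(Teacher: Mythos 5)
Your proof is correct and follows essentially the same route as the paper's: a triangle-inequality split into a measure/base-point change handled by Lemma~\ref{lem:Lipschitz_drift} (valid for $\pi_\ell$ since B1--B2 replicate A1--A2) and a potential change handled by the integral representation, kernel boundedness, and \eqref{eq:app2} (resp.\ \eqref{eq:app1}). The only difference is the order of the intermediate term (you pivot through $P_{\rho_2}\nabla\log(\rho_2/\pi_\ell)(z_2)$ while the paper pivots through $P_{\rho_1}\nabla\log(\rho_1/\pi_{\ell-1})(z_1)$), which is immaterial; your explicit justification of the pointwise bound $|k(x,z)|\le B^2$ via the reproducing property is in fact slightly more careful than the paper's.
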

\begin{proof}%
    Using the triangle inequality
    \begin{align*} 
    \|P_{\rho_1}\nabla\log\left(\frac{\rho_1}{\pi_\ell}\right)(z_1) - P_{\rho_2}\nabla\log\left(\frac{\rho_2}{\pi_{{\ell-1}}}\right)(z_2)\|&\le  \|P_{\rho_1}\nabla\log\left(\frac{\rho_1}{\pi_\ell}\right)(z_1) - P_{\rho_1}\nabla\log\left(\frac{\rho_1}{\pi_{\ell-1}}\right)(z_1)\|\\ &\quad+ \|P_{\rho_1}\nabla\log\left(\frac{\rho_1}{\pi_{\ell-1}}\right)(z_1)- P_{\rho_2}\nabla\log\left(\frac{\rho_2}{\pi_{\ell-1}}\right)(z_2)\| \\
    &=: I_1 + I_2.
    \end{align*}
    By
    Lemma~\ref{lem:Lipschitz_drift}
    \[I_2 \le \clip[\|z_1-z_2\| + \mathcal W_2(\rho_1,\rho_2)].\]
    For $I_1$ we get
    \begin{align*}
        I_1 &= \|\mathbb E_{X\sim\rho_1}[\nabla\log\pi_\ell(X)k(X,z_1)-\nabla\log\pi_{\ell-1}(X)k(X,z_1) + \nabla_1k(X,z_1)-\nabla_1 k(X,z_1)]\|\\
        &= \|\mathbb E_{X\sim\rho_1}[(\nabla\log\pi_\ell(X)-\nabla\log\pi_{\ell-1}(X))k(X,z_1)]\|\\
        &\le B\mathbb E_{X\sim\rho_1}[\|\nabla\log\pi_{\ell}(X)-\nabla\log\pi_{\ell-1}(X)\|]\le 2 B \Capp 2^{-\beta \ell},
    \end{align*}
    where we have used the boundedness of the kernel and \eqref{eq:app2}. The second assertion follows by similar argumentation.
\end{proof}

\begin{proof}[Proof of \Cref{prop:stability}]
    We consider the two stochastic processes 
    \begin{align*}
        Z_{n+1}^{\ell} &= Z_{n}^{\ell} - \gamma R_n^\ell(Z_{n}^{\ell}),\quad Z_{0}^{\ell} \sim\mfsingle_0, \\
        Z_{n+1}^{\ell-1} &= Z_{n}^{\ell-1} - \gamma R_n^{\ell-1}(Z_{n}^{\ell-1}),
    \end{align*}
    with $\quad Z_{0}^{\ell-1}(\omega) = Z_{0}^{\ell}(\omega)$ for $\mathbb P_0$-almost all $\omega\in\Omega$, such that the Wasserstein-2 distance is lower bounded by
    \[\mathcal W_2(\mfell_n,\mfellminus_n) \le \mathbb E_0[\|Z_n^\ell-Z_n^{\ell-1}\|^2]^{\frac12}.\]
    The quantity $c_n = \mathbb E_0[\|Z_n^\ell-Z_n^{\ell-1}\|^2]^{\frac12}$ evolves in time through
    \begin{align*}
        c_{n+1} &= \E_0[\|Z_n^\ell - Z_n^{\ell-1} -\gamma(R_n^\ell(Z_n^\ell)-R_n^{\ell-1}(Z_n^{\ell-1}))\|^2]^{\frac12}\\
                &\le \E_0[\|Z_n^\ell - Z_n^{\ell-1}\|^2]^{\frac12} + \gamma \E_0[\|R_n^\ell(Z_n^\ell)-R_n^{\ell-1}(Z_n^{\ell-1})\|^2]^{\frac12}\\
                &= c_n + \gamma \E_0[ \|P_{\mfell_n}\nabla\log\left(\frac{\mfell_n}{\pi_\ell}\right)(Z_n^\ell)-P_{\mfellminus_n}\nabla\log\left(\frac{\mfellminus_n}{\pi_{\ell-1}}\right)(Z_n^{\ell-1})\|^2]^{\frac12}\\
                &\le c_n + \gamma \dlip \left( \E_0[\|Z_n^\ell-Z_n^{\ell-1}\|^2]^{\frac12} + \mathcal W_2(\mfell_n,\mfellminus_n)+ 2^{-\beta \ell}\right)\\
                &\le (1+2\gamma \dlip) c_n + \gamma\dlip 2^{-\beta\ell},
    \end{align*}
    where we used Lemma \ref{lemma:dlip}.
    Since $c_0=0$, we conclude with discrete Gronwall inequality, Lemma~\ref{lem:discr_Gr}, that
    \[c_n \le \frac{2^{-\beta \ell}}{2}(e^{2\dlip T}-1)\, .\]
    The second assertion follows again by similar argumentation.
\end{proof}

Similarly, one can derive the following stability result on a particle level when applied to our mirrored mean field and auxiliary mirrored mean field particle systems.

\begin{proposition}\label{prop:stability_particles}
    Under Assumption~\ref{ass:kernel2} and \ref{ass:level_accuracy} the mirrored MF limit is stable w.r.t.~changes in $\pi_\ell$, in the sense that
    \begin{align*} 
    \mathcal W_2(\mirrormfell_n,\mirrormfelltildeminus_n)&\le \mathbb E_0[\|Z_n^{\ell,(1)}-\widetilde Z_n^{\ell-1,(1)}\|^2]^{\frac12}\\ 
    &\le%
    \Capp 2^{-\beta \ell} (e^{2\dlip T}-1),
    \end{align*}
    for all $n\le T/\gamma$, where $\mirrormfell_n$ and $\mirrormfelltildeminus_n$ are defined in \cref{sec:MLSVGD}. 
\end{proposition}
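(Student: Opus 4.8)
The plan is to reduce the claim to the pathwise estimate already established in the proof of Proposition~\ref{prop:stability}. Fix an index $i\in\{1,\dots,N_\ell\}$. By \eqref{eq:MF_high} and \eqref{eq:MF_low} the two trajectories satisfy
\begin{align*}
  Z_{n+1}^{\ell,(i)} &= Z_n^{\ell,(i)} - \gamma R_n^\ell(Z_n^{\ell,(i)}),\\
  \widetilde Z_{n+1}^{\ell-1,(i)} &= \widetilde Z_n^{\ell-1,(i)} - \gamma R_n^{\ell-1}(\widetilde Z_n^{\ell-1,(i)}),
\end{align*}
and, by the MMF initialization together with \eqref{eq:tildeZellinit} (with $\ell$ replaced by $\ell-1$, so that $\widetilde{\mathcal Z}^{\ell-1}_n$ has ensemble size $N_\ell$), they start from the \emph{same} realisation $Z_0^{\ell,(i)}(\omega)=X_0^{\ell,(i)}(\omega)=\widetilde Z_0^{\ell-1,(i)}(\omega)$ for all $\omega\in\Omega$. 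The key observation is that the drifts $R_n^\ell$ and $R_n^{\ell-1}$ are deterministic --- they depend only on the mean-field laws $\mfell_n$ and $\mfellminus_n$ and not on the states of the ensembles --- so $(Z_n^{\ell,(i)},\widetilde Z_n^{\ell-1,(i)})_{n\ge0}$ is exactly the coupled pair of iterations analysed in the proof of Proposition~\ref{prop:stability}, with $Z_n^\ell$, $Z_n^{\ell-1}$ replaced by $Z_n^{\ell,(i)}$, $\widetilde Z_n^{\ell-1,(i)}$.

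I would then set $c_n:=\E_0[\|Z_n^{\ell,(i)}-\widetilde Z_n^{\ell-1,(i)}\|^2]^{1/2}$ (independent of $i$, since the pairs are identically distributed in $i$), split the increment with the triangle inequality in $L_0^2(\R)$, and apply Lemma~\ref{lemma:dlip} with $\rho_1=\mfell_n$, $\rho_2=\mfellminus_n$, $z_1=Z_n^{\ell,(i)}$, $z_2=\widetilde Z_n^{\ell-1,(i)}$, bounding $\mathcal W_2(\mfell_n,\mfellminus_n)$ via Proposition~\ref{prop:stability}. This produces the recursion $c_{n+1}\le(1+2\gamma\dlip)c_n+\gamma\dlip\,2^{-\beta\ell}$ with $c_0=0$, and the discrete Gronwall inequality, Lemma~\ref{lem:discr_Gr}, gives $c_n\le\frac{2^{-\beta\ell}}{2}(e^{2\dlip T}-1)\le\Capp 2^{-\beta\ell}(e^{2\dlip T}-1)$ for $n\le T/\gamma$.

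To pass from this per-particle bound to the Wasserstein bound on the empirical measures, I would note that $\frac{1}{N_\ell}\sum_{j=1}^{N_\ell}\delta_{(Z_n^{\ell,(j)},\widetilde Z_n^{\ell-1,(j)})}$ is an admissible coupling of $\mirrormfell_n$ and $\mirrormfelltildeminus_n$ (the marginals are the correct ones since, by Remark~\ref{rem:mirror_particles}, $Z_n^{\ell,(j)}\sim\mfell_n$ and $\widetilde Z_n^{\ell-1,(j)}\sim\mfellminus_n$), whence $\mathcal W_2(\mirrormfell_n,\mirrormfelltildeminus_n)^2\le\frac{1}{N_\ell}\sum_{j=1}^{N_\ell}\|Z_n^{\ell,(j)}-\widetilde Z_n^{\ell-1,(j)}\|^2$; taking $\E_0$ and using Jensen's inequality together with the fact that the summands are identically distributed in $j$ yields $\E_0[\mathcal W_2(\mirrormfell_n,\mirrormfelltildeminus_n)]\le c_n$, which is the claimed inequality.

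There is no genuine obstacle here beyond what was already done for Proposition~\ref{prop:stability}; the only point requiring a moment of care is the bookkeeping of the initial data, namely verifying that $Z_0^{\ell,(i)}$ and $\widetilde Z_0^{\ell-1,(i)}$ are the \emph{same} random variable for each fixed $i$ (so that $c_0=0$), which is exactly what the definitions in \Cref{sec:MLSVGD} guarantee once the ensemble sizes are matched correctly.
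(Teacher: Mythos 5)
Your proposal is correct and follows exactly the route the paper intends: the paper offers no separate proof for this proposition, merely remarking that it follows ``similarly'' to Proposition~\ref{prop:stability}, and your argument is precisely that reduction --- the drifts $R_n^\ell$, $R_n^{\ell-1}$ depend only on the laws $\mfell_n$, $\mfellminus_n$, the initializations \eqref{eq:MF_high} and \eqref{eq:tildeZellinit} match particle-wise so that $c_0=0$, and the empirical coupling gives the Wasserstein bound. If anything, you supply more detail than the paper does, in particular the explicit check that $Z_0^{\ell,(i)}=X_0^{\ell,(i)}=\widetilde Z_0^{\ell-1,(i)}$ once the ensemble sizes are matched.
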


\subsection{Proofs of \Cref{ssec:auxres}}
\begin{proof}[Proof of \Cref{lem:MF_levels}]
  We are going to prove the claim via induction. On level $\ell=0$, we have for all $n\le T/\gamma$
  \begin{equation}\label{eq:induction_start}
    \E_0[\|X_n^{0,(1)}-Z_n^{0,(1)}\|^2]^{\frac12}\lesssim \frac{1}{\sqrt{N_0}},
  \end{equation}
    due to the MF limit Proposition~\ref{prop:err_MC} (or \cite[Proposition~7]{NEURIPS20203202111c}).
    Now, assume that for all $n\le T/\gamma$ 
    \begin{equation}\label{eq:ind_req} 
   \E_0[\|X_n^{\ell-1,(1)}-Z_n^{\ell-1,(1)}\|^2]^{\frac12}\lesssim \frac{1}{\sqrt{N_0}} + \sum_{m=1}^{\ell-1} \frac{2^{-\beta m}}{\sqrt{N_m}}\,
    \end{equation}
    for some $\ell\ge1$. We define 
    $\Delta_n^{\ell} = \E_0[\|X_{n}^{\ell,(1)} - \widetilde X_{n}^{\ell-1,(1)} - (Z_n^{\ell,(1)}-\widetilde Z_n^{\ell-1,(1)})\|^2]^{\frac12}$ and observe
    the following nested behavior
    \begin{equation} \label{eq:nested}
    \begin{split}
    \E_0[\|X_n^{\ell,(1)} - Z_n^{\ell,(1)}\|^2]^{\frac12} &\le \E_0[\|\widetilde X_n^{\ell-1,(1)}-\widetilde Z_n^{\ell-1,(1)}\|^2] + \Delta_n^\ell\\
    &\le \Delta_n^{\ell-1}+ \E_0[\|X_n^{\ell-1,(1)} - Z_n^{\ell-1,(1)}\|^2]^{\frac12} + \Delta_n^\ell 
    \end{split}
    \end{equation}
    with the convention $\Delta_n^{0}=\E_0[\|X_n^{0,(1)} - Z_n^{0,(1)}\|^2]^{\frac12}$.
    For each $\ell\ge1$, we compute iteratively
    \begin{align*}
        \Delta_{n+1}^{\ell} &\le \Delta_{n}^\ell + \gamma \E_0[\|\widehat R_n^\ell(X_n^{\ell,(1)})-\widetilde{\widehat R}_n^{\ell-1}(\widetilde X_n^{\ell-1,(1)})-(R_n^{\ell}(Z_n^{\ell,(1)})-R_n^{\ell-1}(\widetilde Z_n^{\ell-1,(1)}))\|^2]^{\frac12}\\
        &\le \Delta_{n}^\ell + \gamma \E_0[\|P_{\xell_n}\nabla\log\left(\frac{\xell_n}{\pi_\ell}\right)(X_n^{\ell,(1)})- P_{\mirrormfell_n}\nabla\log\left(\frac{\mirrormfell_n}{\pi_\ell}\right)(Z_n^{\ell,(1)})\|^2]^{\frac12}\\
        &\quad + \gamma \E_0[\|P_{\xelltildeminus_n}\nabla\log\left(\frac{\xelltildeminus_n}{\pi_{\ell-1}}\right)(\widetilde X_n^{\ell-1,(1)})- P_{\mirrormfelltildeminus_n}\nabla\log\left(\frac{\mirrormfelltildeminus_n}{\pi_{\ell-1}}\right)(\widetilde Z_n^{\ell-1,(1)})\|^2]^{\frac12}\\
                            &\quad+\gamma \E_0[\|P_{\mirrormfell_n}\nabla\log\left(\frac{\mirrormfell_n}{\pi_\ell}\right)(Z_n^{\ell,(1)})-P_{\mirrormfelltildeminus_n}\nabla\log\left(\frac{\mirrormfelltildeminus_n}{\pi_{\ell-1}}\right)(\widetilde Z_n^{\ell-1,(1)}) \\ &\quad - \left(P_{\mfell_n}\nabla\log\left(\frac{\mfell_n}{\pi_\ell}\right)(Z_n^{\ell,(1)})-P_{\mfellminus_n}\nabla\log\left(\frac{\mfellminus_n}{\pi_{\ell-1}}\right)(\widetilde Z_n^{\ell-1,(1)})\right)\|^2]^{\frac12}
    \end{align*}
Using Lemma \ref{lemma:dlip} we obtain
    \begin{align*}
        \Delta_{n+1}^\ell&\le \Delta_n^\ell + \gamma \clip \Bigg(\E_0[\|X_n^{\ell,(1)}-Z_n^{\ell,(1)}\|^2]^{\frac12}+\E_0[\mathcal W_2(\xell_n,\mirrormfell_n)]\\ &\quad + \E_0[\|\widetilde X_n^{\ell-1,(1)}-\widetilde Z_n^{\ell-1,(1)}\|^2]^{\frac12} + \E_0[\mathcal W_2(\xelltildeminus_n,\mirrormfelltildeminus_n)]\Bigg) + \gamma \E_0[A]^{\frac12},
    \end{align*}
    where
    \begin{align*} %
      A&:=\Big\|P_{\mirrormfell_n}\nabla\log\left(\frac{\mirrormfell_n}{\pi_\ell}\right)(Z_n^{\ell,(1)})-P_{\mirrormfelltildeminus_n}\nabla\log\left(\frac{\mirrormfelltildeminus_n}{\pi_{\ell-1}}\right)(\widetilde Z_n^{\ell-1,(1)})\\ &\quad  - \left(P_{\mfell_n}\nabla\log\left(\frac{\mfell_n}{\pi_\ell}\right)(Z_n^{\ell,(1)})-P_{\mfellminus_n}\nabla\log\left(\frac{\mfellminus_n}{\pi_{\ell-1}}\right)(\widetilde Z_n^{\ell-1,(1)})\right)\Big\|.
    \end{align*}
  Remember, $\mirrormfell_n$ and $\mirrormfelltildeminus_n$ denote empirical measures over i.i.d.~sample according to $\mfell_n$ and $\mfellminus_n$. Next,
  due to the stationarity of $k$ it holds $k(z,z)=k(0,0)$
    and $\nabla_1 k(z,z)=\nabla_1k(0,0)$ for all $z\in\R^d$.
    Thus
    \begin{align*}
        P_{\mirrormfell_n}\nabla\log\left(\frac{\mirrormfell_n}{\pi_\ell}\right)(Z_n^{\ell,(1)}) &= - \frac{1}{N_\ell} \sum_{m=1}^{N_\ell} \left(\nabla \log\pi_\ell(Z_n^{\ell,(m)})k(Z_n^{\ell,(m)},Z_n^{\ell,(1)}) + \nabla_1 k(Z_n^{\ell,(m)},Z_n^{\ell,(1)})\right)\\ &= -\frac{1}{N_\ell}\nabla \log\pi_\ell(Z_n^{\ell,(1)})k(Z_n^{\ell,(1)},Z_n^{\ell,(1)}) - \frac{1}{N_\ell}\nabla_1 k(Z_n^{\ell,(1)},Z_n^{\ell,(1)})\\ &\quad -\frac{1}{N_\ell} \sum_{m=2}^{N_\ell} \left(\nabla \log\pi_\ell(Z_n^{\ell,(m)})k(Z_n^{\ell,(m)},Z_n^{\ell,(1)}) + \nabla_1 k(Z_n^{\ell,(m)},Z_n^{\ell,(1)})\right)\\
        &= -\frac{1}{N_\ell}\nabla \log\pi_\ell(Z_n^{\ell,(1)})k(z,z)- \frac{1}{N_\ell}\nabla_1 k(z,z)\\ &\quad -\frac{N_\ell-1}{N_\ell} \frac1{N_\ell - 1} \sum_{m=2}^{N_\ell} \left(\nabla \log\pi_\ell(Z_n^{\ell,(m)})k(Z_n^{\ell,(m)},Z_n^{\ell,(1)}) + \nabla_1 k(Z_n^{\ell,(m)},Z_n^{\ell,(1)})\right)
    \end{align*}
    for any $z\in\R^d$. Similarly, one can derive 
    \begin{align*}
        P_{\mirrormfelltildeminus_n}&\nabla\log\left(\frac{\mirrormfelltildeminus_n}{\pi_{\ell-1}}\right)(\widetilde Z_n^{\ell-1,(1)})\\ &= -\frac{1}{N_\ell} \nabla\log\pi_\ell( \widetilde Z_n^{\ell-1,(1)}) k(z,z) - \frac{1}{N_\ell}\nabla_1 k(z,z)\\
         &\quad -\frac{N_\ell-1}{N_\ell} \frac1{N_\ell - 1} \sum_{m=2}^{N_\ell} \left(\nabla \log\pi_{\ell-1}(\widetilde Z_n^{\ell-1,(m)})k(\widetilde Z_n^{\ell-1,(m)},\widetilde Z_n^{\ell-1,(1)}) + \nabla_1 k(\widetilde Z_n^{\ell-1,(m)},\widetilde Z_n^{\ell-1,(1)})\right)\,.
    \end{align*}
    Note that
    $\mathcal W_2(\frac{1}{N_\ell-1} \sum_{m=2}^{N_\ell}
    \delta_{Z_n^{\ell,(m)}},\frac{1}{N_\ell-1} \sum_{m=2}^{N_\ell}
    \delta_{\widetilde Z_n^{\ell-1,(m)}})\lesssim 2^{-\beta \ell}$ by using  \cref{prop:stability_particles}.
    We are ready
    to decompose
    $\mathbb E_0[A]^{\frac12} \le \mathbb E_0[A_1]^{\frac12}+\mathbb
    E_0[A_2]^{\frac12}+\mathbb E_0[A_3]^{\frac12} $ with
    \begin{align*}
        \mathbb E_0[A_1]^{\frac12} &:= \frac{k(z,z)}{N_\ell}  \mathbb E_0[\| \nabla\log\pi_\ell(Z_n^{\ell,(1)})-\nabla\log\pi_{\ell-1}(\widetilde Z_n^{\ell-1,(1)})\|^2]^{\frac12}\\ &\le \frac{k(z,z)}{N_\ell} \left( \mathbb E_0[\| \nabla\log\pi_\ell(Z_n^{\ell,(1)})-\nabla\log\pi_{\ell}(\widetilde Z_n^{\ell-1,(1)})\|^2]^{\frac12} + \mathbb E_0[\| \nabla\log\pi_\ell(\widetilde Z_n^{\ell-1,(1)})-\nabla\log\pi_{\ell-1}(\widetilde Z_n^{\ell-1,(1)})\|^2]^{\frac12} \right)  \\ &\lesssim \frac{2^{-\beta\ell}}{N_\ell}\,,\\
        \mathbb E_0[A_2]^{\frac12} &:= \frac1{N_\ell} \mathbb E_0\Bigg[ \Bigg\| \frac1{N_\ell - 1} \sum_{m=2}^{N_\ell} \left(\nabla \log\pi_\ell(Z_n^{\ell,(m)})k(Z_n^{\ell,(m)},Z_n^{\ell,(1)}) + \nabla_1 k(Z_n^{\ell,(m)},Z_n^{\ell,(1)})\right) \\
        &\quad - \frac1{N_\ell - 1} \sum_{m=2}^{N_\ell} \left(\nabla \log\pi_{\ell-1}(\widetilde Z_n^{\ell-1,(m)})k(\widetilde Z_n^{\ell-1,(m)},\widetilde Z_n^{\ell-1,(1)}) + \nabla_1 k(\widetilde Z_n^{\ell-1,(m)},\widetilde Z_n^{\ell-1,(1)})\right)\Bigg\|^2\Bigg]^{\frac12} \\
        &\lesssim \frac1{N_\ell} \left(\mathbb E_0[\|Z_n^{\ell,(1)}-\widetilde Z_n^{\ell-1,(1)}\|^2]^{\frac{1}2} + \mathcal W_2(\mirrormfell_n,\mirrormfelltildeminus_n)+2^{-\beta\ell} \right) \lesssim \frac{2^{-\beta\ell}}{N_\ell}\,,
    \end{align*}
    where we used Lemma~\ref{lemma:dlip}, and by the Marcinkiewicz-Zygmund inequality \cite[Theorem~5.2]{KM2015} we obtain
    \begin{align*}
        \mathbb E_0[A_3]^{\frac12} &:= \mathbb E_0\Bigg[\Bigg\| \frac1{N_\ell - 1} \sum_{m=2}^{N_\ell} \left(\nabla \log\pi_\ell(Z_n^{\ell,(m)})k(Z_n^{\ell,(m)},Z_n^{\ell,(1)}) + \nabla_1 k(Z_n^{\ell,(m)},Z_n^{\ell,(1)})\right)\\ &\quad-\frac1{N_\ell - 1} \sum_{m=2}^{N_\ell} \left(\nabla \log\pi_\ell(Z_n^{\ell,(m)})k(Z_n^{\ell,(m)},Z_n^{\ell,(1)}) + \nabla_1 k(Z_n^{\ell,(m)},Z_n^{\ell,(1)})\right)\\
        &\quad-\left(P_{\mfell_n}\nabla\log\left(\frac{\mfell_n}{\pi_\ell}\right)(Z_n^{\ell,(1)})-P_{\mfellminus_n}\nabla\log\left(\frac{\mfellminus_n}{\pi_{\ell-1}}\right)(\widetilde Z_n^{\ell-1,(1)})\right)\Bigg\|^2\Bigg]^{\frac12}\\
        &\le \frac{1}{\sqrt{N_\ell-1}} \left(\E_0[\|Z_n^{\ell,(1)}-\widetilde Z_n^{\ell-1,(1)}\|^2]^{\frac12} + \mathcal W_2(\mfell_n,\mfellminus_n)\right) \lesssim \frac{2^{-\beta \ell}}{\sqrt{N_\ell}}\, ,
    \end{align*}
    since $\{Z_n^{\ell,(m)}\}_{m=2}^{N_\ell}$ and $\{\widetilde Z_n^{\ell-1,(m)}\}_{m=2}^{N_\ell}$ are i.i.d.~samples according to $\mfell_n$ and $\mfellminus_n$.

    Using the nested behavior \eqref{eq:nested} and the additional bounds 
    \begin{align*}
    \E_0[\mathcal W_2(\xell_n,\mirrormfell_n)] &\le \E_0[\|X_n^{\ell,(1)}-Z_n^{\ell,(1)}\|^2]^{\frac12}\le \Delta_n^{\ell-1} + \E_0[\|X_n^{\ell-1,(1)}-Z_n^{\ell-1,(1)}\|^2]^{\frac12} +\Delta_n^\ell, \\ \E_0[\mathcal W_2(\xelltildeminus_n,\mirrormfelltildeminus_n)] &\le \E_0[\|\widetilde X_n^{\ell-1,(1)}-\widetilde Z_n^{\ell-1,(1)}\|^2]^{\frac12}\le \E_0[\|X_n^{\ell-1,(1)}-Z_n^{\ell-1,(1)}\|^2]^{\frac12} +\Delta_n^\ell\, ,
    \end{align*}
    we obtain the iterative bound for $(\Delta_n^\ell)_{n\ge0}$ written as
    \begin{align}
        \Delta_{n+1}^\ell &\le \Delta_n^{\ell} + \gamma \clip(\Delta_n^{\ell-1}+2\E_0[\|X_n^{\ell-1,(1)}-Z_n^{\ell-1,(1)}\|^2]^{\frac12} + 2\Delta_n^\ell) + C \frac{2^{-\beta \ell}}{\sqrt{N_\ell}} \notag \\
        &\le (1+2\gamma\clip) \Delta_n^\ell+\gamma\clip \Delta_n^{\ell-1} + C\left( \frac{1}{\sqrt{N_0}} + \sum_{m=1}^\ell \frac{2^{-\beta m}}{\sqrt{N_m}}\right). \label{eq:delta_bound}
    \end{align}
    Here, we have used the induction %
    hypothesis
    \eqref{eq:ind_req}.

    We will use another (nested) inductive argument over $\ell' = 0,\dots,\ell$ to verify that
    \begin{equation}\label{eq:induction2}
      \Delta_j^{\ell'} \lesssim \frac{1}{\sqrt{N_0}} + \sum_{m=1}^{\ell'} \frac{2^{-\beta m}}{\sqrt{N_m}}\qquad\text{for all } j\le n. %
    \end{equation}
    For $\ell'=0$, the bound $\Delta_j^0\lesssim \frac{1}{\sqrt{N_0}}$ holds by \eqref{eq:induction_start}.
    Next, suppose the induction hypothesis is true for some $\ell'-1\ge 0$, i.e.~$\Delta_j^{\ell'-1} \lesssim \frac{1}{\sqrt{N_0}} + \sum_{m=1}^{\ell'-1} \frac{2^{-\beta m}}{\sqrt{N_m}}$. %
    Then we deduce from \eqref{eq:delta_bound} that
    \begin{align*}
        \Delta_{j+1}^{\ell'} 
        &\lesssim (1+2\gamma\clip) \Delta_{j}^{\ell'}+\gamma\clip \sum_{m=1}^{\ell'-1} \frac{2^{-\beta m}}{\sqrt{N_m}} + C \left( \frac{1}{\sqrt{N_0}} + \sum_{m=1}^{\ell'} \frac{2^{-\beta m}}{\sqrt{N_m}}\right).
    \end{align*}
    By the discrete Gronwall inequality, see Lemma~\ref{lem:discr_Gr}, with $\Delta_0^{\ell'} = 0$ we obtain that $\Delta_j^\ell \lesssim \frac{1}{\sqrt{N_0}} + \sum_{m=1}^{\ell'} \frac{2^{-\beta m}}{\sqrt{N_m}}$ for all $j\le n$, which shows \eqref{eq:induction2}. Finally, we %
    obtain with \eqref{eq:nested}
    \[\E_0[\|X_n^{\ell,(1)}-Z_n^{\ell,(1)}\|^2]^{\frac12} %
      \lesssim \frac{1}{\sqrt{N_0}} + \sum_{m=1}^\ell \frac{2^{-\beta m}}{\sqrt{N_m}},   \]
    which concludes the proof of the lemma.
\end{proof}

\section{Proofs of \Cref{sec:complexity}}

\begin{proof}[Proof of \Cref{thm:complexity_SL}]
    Firstly, for the choice $L= \ceil{\frac{1}{\beta\log(2)}\log(\frac2\varepsilon)}$ we obtain 
    $2^{-\beta L} \le \varepsilon/2$ and with $N_L \propto \varepsilon^{-2}$ the expected error is bounded by
    \[\E_0[|\xellL_n[\varphi]- \mfsingle_n[\varphi]|^2]^{\frac12} \lesssim \frac{1}{\sqrt{N_L}} + 2^{-\beta L} \lesssim \varepsilon.\]
    The resulting computational cost is
    \[\cost_{\SL} = n  N_L  2^{qL} = n N_L 2^{\frac{q}{\beta \log(2)}\log(\frac2\varepsilon)} \lesssim \varepsilon^{-2-\frac{q}{\beta}}.\]
\end{proof}

\begin{proof}[Proof of \Cref{thm:complexity_ML}]
  By
    Theorem~\ref{thm:err_ML} the expected error of the ML estimator is bounded by
    \[\E_0[|\widehat\mfsingle_n^{\ML}[\varphi]- \mfsingle_n[\varphi]|^2]^{\frac12} \lesssim (L+1)\left(\frac{1}{\sqrt{N_0}} + \sum_{\ell=1}^L \frac{2^{-\beta\ell}}{\sqrt{N_\ell}}\right) + 2^{-\beta L},\]
    where with $L= \ceil{\frac{1}{\beta\log(2)}\log(\frac2\varepsilon)}$ we have that $2^{-\beta L} \le \varepsilon/2$. With the choice $N_\ell \propto (L+1)^4 2^{-2\beta\ell}\varepsilon^{-2}$ we obtain
    $\frac{2^{-\beta\ell}}{\sqrt{N_\ell}} \lesssim \frac{1}{(L+1)^2}\varepsilon$
    and therefore 
    \[\E_0[|\widehat\mfsingle_n^{\ML}[\varphi]- \mfsingle_n[\varphi]|^2]^{\frac12} \lesssim (L+1)\sum_{\ell=0}^L\frac{\varepsilon}{(L+1)^2} + 2^{-\beta L} \lesssim \varepsilon.\]
    The computational cost is given by
    \[\cost_{\ML} = n \left(\sum_{\ell = 0}^L N_\ell  2^{q\ell} + \sum_{\ell = 1}^L N_\ell  2^{q(\ell-1)} \right) \lesssim n 2^{1-q}(L+1)^4\varepsilon^{-2} \sum_{\ell = 0}^L 2^{(q-2\beta)\ell}. \]
    For $q = 2\beta$ it holds $\sum_{\ell = 0}^L 2^{(q-2\beta)\ell} = L+1$, which yields \[\cost_{\ML}\lesssim n2^{1-q}(L+1)^5 \varepsilon^{-2}\lesssim |\log(\varepsilon)|^5 \varepsilon^{-2}.\] For $q<2\beta$ it holds $\sum_{\ell = 0}^L 2^{(q-2\beta)\ell}\le 2$ such that \[\cost_{\ML}\lesssim n2^{1-q}(L+1)^4 \varepsilon^{-2}\lesssim |\log(\varepsilon)|^4 \varepsilon^{-2}.\] Finally, for $q>2\beta$ we have $\sum_{\ell = 0}^L n2^{(q-2\beta)\ell}= \frac{2^{(q-\beta)(L+1)}-1}{2-1} \le 2^{(q-2\beta)(L+1)}\lesssim \varepsilon^{(\frac{q}{\beta}-2)}$ and the total cost is bounded by \[\cost_{\ML}\lesssim 2^{1-q} (L+1)^4 \varepsilon^{-2-(\frac{q}{\beta}-2)}\lesssim |\log(\varepsilon)|^4 \varepsilon^{-\frac{q}{\beta}}.\]
\end{proof}

\end{document}